\newtheorem{theorem}{Theorem}
\newtheorem{proposition}{Proposition}
\newtheorem{lemma}{Lemma}
\newtheorem{remark}{Remark}
\numberwithin{proposition}{section}
\numberwithin{lemma}{section}
\numberwithin{remark}{section}
\numberwithin{equation}{section}
\newcommand{\<}{\left\langle}
\renewcommand{\>}{\right\rangle}
\def\N{\mathbb N}
\def\R{\mathbb R}
\def\Pb{\mathbb P}
\def\E{\mathbb{E}\,}
\newcommand{\Const}{\mbox{\rm Const}}
\newcommand{\Cov}{\mbox{\rm Cov}}
\newcommand{\dist}{\mbox{\rm dist}}
\newcommand{\diag}{\mbox{\rm diag}}
\newcommand{\Var}{\mbox{\rm Var}}
\def\bY{{\mathbf Y}}
\def\bB{{\mathbf B}}
\newcommand{\bhY}{\overline{\mathbf{Y}}}
\newcommand{\hY}{\overline{Y}}
\def\CC{{\cal C}}
\def\FF{{\cal F}}
\def\VV{{\cal V}}
\def\balpha{\boldsymbol\alpha}
\def\bmu{\boldsymbol\mu}
\def\bbeta{\boldsymbol\beta}
\def\blambda{\boldsymbol\lambda}
\def\bj{\boldsymbol j}
\def\bhZ{{\mathbf Z}_d}
\title{Central Limit Theorem for the volume of the zero set of
Kostlan-Shub-Smale random polynomial systems.}
\author{Diego Armentano\thanks{Centro de Matem\'{a}tica, 
Universidad de la Rep\'{u}blica, Montevideo, Uruguay. 
E-mail: diego@cmat.edu.uy.}
\;
Jean-Marc Aza\"{i}s\thanks{Institut de Math\'ematiques de Toulouse; 
UMR5219. Universit\'e de Toulouse; CNRS. UT3, F-31062 Toulouse, France. Email: 
jean-marc.azais@math.univ-toulouse.fr
}
\;
Federico Dalmao\thanks{
Departamento de Matem\'{a}tica y Estad\'{i}stica del Litoral, 
Universidad de la Rep\'{u}blica, Salto, Uruguay. 
E-mail: fdalmao@unorte.edu.uy.}
\;
Jos\'e R. Le\'{o}n\thanks{IMERL, Universidad de la Rep\'ublica, 
Montevideo, Uruguay and Escuela de Matem\'atica, Universidad Central de Venezuela. 
E-mail: rlramos@fing.edu.uy.
}}
\begin{document}
\makeatletter
\maketitle
\makeatother
\abstract{We establish the Central Limit Theorem, as the degree goes to infinity, 
for the normalized volume of the zero set of a rectangular 
Kostlan--Shub--Smale random polynomial system. 
This paper is a continuation of 
{\it Central Limit Theorem for the number of real roots of
Kostlan--Shub--Smale random polynomial systems} 
by the same authors 
in which the case of square systems was considered. 
Our main tools are Kac-Rice formula and  an expansion 
of the volume of the level set into the It\^o-Wiener Chaos. \\
Keywords: {\em Kostlan--Shub--Smale random polynomial systems, 
co-area formula, Kac-Rice formula, central limit theorem.}\\
AMS sujet classification: Primary: 60F05,  30C15. Secondary: 60G60, 65H10}

\section{Introduction}
The problem of studying the number of roots of random algebraic
polynomials has attracted much interest for a long time. It is worth 
mentioning the seminal article of M. Kac on the subject \cite{Kac} 
where a proof of the 
now famous Kac-Rice formula \cite{aw} was given. 
This formula establishes an 
analytical expression for computing the expectation of the number of 
zeros of a Gaussian random process. At the
beginning the main interest was limited to compute the expected value of
such a number.  Later on there were also considered its variance
\cite{Mas} and the Central Limit Theorem (CLT) \cite{Mas2}.

The algebraic systems of random polynomials of several variables were
considered much later motivated by the inspiring work, due to Shub
and Smale, for the understandig of the complexity of B\'ezout's theorem
(see \cite{bez1}, \cite{ss}, \cite{bez3},  \cite{bez4} and
\cite{bez5}). 

Kostlan \cite{Kos} and Shub-Smale \cite{ss} studied random
polynomials systems that are invariant under rotations. 
The properties of invariance of these polynomials 
allow considering them as functions
over the multidimensional sphere. 
In Kostlan's paper 
an explicit expression for the expectation of the number of roots
for a square system of such polynomials was given. 
For rectangular systems the study was
directed to the behavior of the volume of their zero sets. 

Wschebor \cite{ws-var}, in a seminal work, gave for the first time a
bound for a limit variance in the case when the degree of the system is controlled and the
size of the system tends to infinity. 
A central limit theorem for this
asymptotic scheme is still an open problem, even for the particular case of
quadratic systems.

Another asymptotic regime naturally arises, namely: 
to fix the number of equations and variables 
and to let the degree grow to infinity. 
Under this scheme, 
the asymptotic variance for 
the number of roots of square systems 
was obtained in \cite{aadl} and \cite{lt-pu}. 
In the case of rectangular systems the asymptotic variance 
of the volume of the zero level set was given in \cite{lt}. 
Besides, in \cite{aadl2} a central limit theorem 
for the number of roots of the system 
in the square case was obtained.

The present paper 
extends the results listed in the 
last paragraph 
obtaining a central limit theorem 
for the volume of the
zero set of a Kostlan-Shub-Smale random rectangular system 
as the (common) degree tends to infinity 
(see Section 2 for the precise definition). 
We also give an alternative (simpler) 
proof of the limit variance. 

This paper is a continuation of \cite{aadl,aadl2}. 
The proof has the same structure, but new arguments are needed, 
see the comments and remarks.

The fundamental tools for the study of the zero sets are: 
on the one hand the Kac-Rice formulas for calculating 
the mean and variance of the functionals of these sets (\cite{aw})
and on the other hand the well-known 
theorem of the fourth moment to establish the CLT 
for nonlinear functionals of Gaussian processes or fields (\cite{np}).

\section{Main result}
Consider a rectangular system $\bY_d$ of $r$ 
homogeneous polynomial equations in $m+1$ variables 
with common degree $d>1$. 
More precisely, 
let $\bY_d=(Y_1,\dots,Y_r)$ with 
\begin{equation*}
 Y_{\ell}(t)=\sum_{|\bj|= d}a^{(\ell)}_{\bj}t^{\bj};\quad \ell=1,\ldots,r,
\end{equation*}
where 
\begin{enumerate}
  \item $\bj=(j_{0},\dots,j_{m})\in\N^{m+1}$ 
and $|\bj|=\sum^{m}_{k=0}j_{k}$; 
  \item $a^{(\ell)}_{\bj}
  \in\R$, $\ell=1,\dots,r$, $|\bj|= d$;
  \item $t=(t_{0},\dots,t_{m})\in\R^{m+1}$ 
    and $t^{\bj}=\prod^m_{k=0} t^{j_k}_{k}$.
\end{enumerate}

% distribucion KSS
The system $\bY_d$ has the Kostlan--Shub--Smale 
(KSS for short) distribution if the coefficients $a^{(\ell)}_{\bj}$ 
are independent centred normally distributed random variables with variances
\begin{equation*}
  \Var\left(a^{(\ell)}_{\bj}\right)=\binom{d}{\bj}=\frac{d!}{ 
j_0!j_1!\dots j_m!}. 
\end{equation*}

% Ceros
In the case $r<m$, we are interested in the zero level set of $\bY_d$. 
Note that, 
since $\bY_d$ is homogeneous, 
its roots consist of lines through the origin in 
$\R^{m+1}$. Then, 
each root ray of $\bY_d$ in $\R^{m+1}$ corresponds 
exactly to two (opposite) roots of $\bY_d$ on the unit sphere $S^m$ of 
$\R^{m+1}$.  
Hence, the unit sphere $S^m$ is a natural place where to consider the zero set. 

By a Sard-type argument, the zero level set of $\bY_d$ on $S^m$ is, almost
surely, a smooth
sub-manifold of dimension $m-r$ (see for example Aza\"is \& Wschebor
\cite[pp.177]{aw}). We denote by $\VV_{\bY_d}(\mathbf 0)$ the $(m-r)$-volume of the zero level set (on the sphere).

Shub and Smale \cite{ss} and Kostlan \cite{Kos}  proved that
$\E[\VV_{\bY_d}(\mathbf 0)]=2d^{r/2}c_{m,r}$, $r\leq m$, where
$c_{m,r}$
is the geometric measure of the sphere $S^{m-r}$ as a sub-manifold of
$S^m$. 
Letendre \cite{lt} and Letendre-Puchol \cite{lt-pu}, proved 
that there exists $0<V^r_\infty<\infty$ such that
\begin{equation}\label{eq:v-r}
\lim_{d\to\infty}\frac{\Var(\VV_{\bY_d}(\mathbf 0))}{d^{r-m/2}}=V^r_\infty.
\end{equation} 
We include a different proof in Appendix \ref{app}.

We now establish the CLT for the rectangular case.
\begin{theorem}\label{tcl}
Let $\bY_d$ be an $r\times (m+1)$ KSS homogeneous system. 
Then, if $r<m$, 
the standardized $(m-r)$-volume of the zero level set
\[
\bar{\VV}_d=\frac{\VV_{\bY_d}(\mathbf 0)-\E[\VV_{\bY_d}(\mathbf 0)]}{d^{\frac r2-\frac m4}}
\]
converges in distribution as $d\to\infty$ towards a centred normal
random variable with finite positive variance. 
\end{theorem}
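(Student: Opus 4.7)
The plan is to transfer the Wiener-chaos strategy of \cite{aadl} to the rectangular setting, with the coarea formula replacing the Kac-Rice counting formula so that a positive-dimensional zero set is handled. First, I would represent the $(m-r)$-volume as
\begin{equation*}
\VV_{\bY_d}(\mathbf 0) \;=\; \int_{S^m} \delta_{\mathbf 0}(\bY_d(t))\, \sqrt{\det\bigl(D\bY_d(t)\, D\bY_d(t)^\ast\bigr)}\, d\sigma(t),
\end{equation*}
where $D\bY_d(t)$ is the intrinsic differential along $S^m$ and the Dirac mass is interpreted in the Kac-Rice sense, as a limit of narrow Gaussian approximations. Normalizing so that $\bY_d(t)$ and $D\bY_d(t)/\sqrt d$ become standard Gaussian vectors turns the integrand into a degree-independent functional $F$ of a Gaussian vector whose covariance structure is driven by the kernel $\langle t, s\rangle^d$.

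Next, I would expand $F$ in Hermite polynomials of the normalized Gaussian entries, yielding the Wiener-It\^o chaos decomposition
\begin{equation*}
\VV_{\bY_d}(\mathbf 0)-\E\,\VV_{\bY_d}(\mathbf 0) \;=\; \sum_{q\geq 1} I_q^{(d)}, \qquad I_q^{(d)} \;=\; \int_{S^m} J_q F(t)\, d\sigma(t),
\end{equation*}
where $J_q$ is the projection onto the $q$-th chaos. The variance of each term is a double integral over $S^m \times S^m$ of sums of products of $\langle t,s\rangle^d$ and its derivatives. Since this kernel is concentrated in a $1/\sqrt d$ neighbourhood of the diagonal, the local rescaling $u=\sqrt d (t-s)$ together with a Laplace-type argument yields the scaling $\Var(I_q^{(d)}) \asymp d^{\,r-m/2}$, exactly matching the normalization $d^{\,r/2-m/4}$ in the statement.

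The CLT would then follow by applying the Nualart-Peccati Fourth Moment Theorem chaos by chaos: convergence of each renormalized $I_q^{(d)}/d^{\,r/2-m/4}$ to a Gaussian reduces to the vanishing of the associated contractions, which in turn follows from the off-diagonal decay of $\langle t,s\rangle^d$. The tail would be controlled by a uniform bound on $\sum_{q\geq Q}\Var(I_q^{(d)})$, so that the whole series converges to a Gaussian with variance $\sum_{q\geq 1}\sigma_q^2$. Positivity of this variance would be obtained by isolating one chaos (typically $q=2$), computing its limiting contribution explicitly and showing that it is strictly positive. The main obstacle is precisely these last two points: the factor $\sqrt{\det(D\bY_d\, D\bY_d^\ast)}$ is a square root rather than a polynomial in the derivatives, so its Hermite coefficients and the ensuing combinatorics must be handled with care, and the non-cancellation of the dominant chaos in the limit will require a delicate local analysis near the diagonal of $S^m\times S^m$, paralleling but going beyond the argument used in the square case.
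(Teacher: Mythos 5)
Your outline reproduces the paper's skeleton faithfully: the coarea/Dirac representation of the $(m-r)$-volume, the Hermite--It\^o chaos expansion of the normalized functional of $(\bY_d,\bY'_d/\sqrt d)$, the $d^{\,r-m/2}$ variance scaling obtained by zooming at scale $1/\sqrt d$, and the Fourth Moment Theorem applied chaos by chaos with contractions killed by the decay of $\langle s,t\rangle^d$. The two points you defer to the end, however, are exactly where the content of the proof lies, and as written they remain promissory notes rather than arguments.

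The genuine gap is the uniform-in-$d$ bound on the chaos tail $\sum_{q\geq Q}\Var(I_q^{(d)})$. Off-diagonal decay of $\langle s,t\rangle^d$ does handle the region $\dist(s,t)\geq a/\sqrt d$: there Arcones' inequality applies with a correlation coefficient bounded by some $r_0<1$, so the tail is dominated by a convergent geometric series. But on the strip $\dist(s,t)<a/\sqrt d$ the correlations of the normalized vector approach $1$ and this mechanism collapses; no pointwise bound on the kernel gives summability in $q$ uniformly in $d$ there. The paper's resolution is indirect: partition $S^m$ into $O(d^{m/2})$ hyperspherical rectangles of diameter $O(1/\sqrt d)$ plus a negligible set, reduce by invariance under isometries to a single cap, and prove that the rescaled field converges (with its derivatives) to the stationary limit field with covariance $e^{-\|u-v\|^2/2}$ on $[-1/2,1/2]^m$; Parseval plus convergence of each chaotic variance then shows that the tail variance on each rectangle is asymptotically that of the limit field, which is summable in $q$, and a Cauchy--Schwarz argument over neighbouring rectangles finishes the diagonal term. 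Nothing in your proposal supplies a substitute for this step. Separately, for the positivity of the limit variance the paper does not compute any chaos explicitly: it imports Letendre's asymptotics $\Var(\VV_{\bY_d}(\mathbf 0))/d^{\,r-m/2}\to V^r_\infty\in(0,\infty)$ and identifies $\sum_q V^r_q$ with $V^r_\infty$ via Parseval and the tail bound. Your plan to isolate the $q=2$ chaos and show its limit is strictly positive is a legitimate alternative route, but it is a nontrivial computation that you have not carried out, and without either it or Letendre's result the "positive variance" claim in the statement is unproved.
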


\begin{remark}
  Note that the variance of the volume of the zero level set 
  exhibits a surprising behavior as $d\to\infty$. 
  More precisely, as $d\to\infty$, 
  if $r<m/2$, then the variance tends to $0$, 
  if $r=m/2$ it tends to a constant and 
  if $r>m/2$ it tends to infinity. 
  Thus, the normalization in the CLT either reduces or amplifies 
  the oscillations of the volume of the zero level set.
\end{remark}

\begin{remark}
This result can be extended to general functionals of the level sets 
using the same arguments.

Indeed, let us denote the zero level set of $\mathbf Y_d$ as
\[
\mathcal C_{\mathbf Y_d}(0)=\{t\in S^m:\, \mathbf Y_d(t)=0\}.
\]
If $g:S^m\to \R$ is an a.s. continuous function we define the level linear functional 
\[
<g,\mathbf 1_{\mathbf Y_d}>=\int_{\mathcal C_{\mathbf Y_d(0)}}g(t)dt,
\]
where $\mathbf 1_{\mathbf Y_d}$ is the indicator function of
  $\mathcal C_{\mathbf Y_d}(0)$.
Thus $\mathcal V_{\mathbf Y_d}(\mathbf 0) = <\mathbf 1_{\mathbf Y_d},\mathbf 1_{\mathbf Y_d}>$. 
We point out that the study conducted  in the present paper for $\mathcal V_{\mathbf Y_d}(\mathbf 0)$, 
could be made for those functionals. To explain a 
little how to proceed we need in first place a Kac-Rice formula for the first and second order for such a functional. 
This is considered in Chapter 6 
of \cite{aw} (Theorm 6.10 p. 168). In the second place the asymptotic behavior of the variance and the Hermite expansion can be obtained in a 
similar form as we have made here. The reader can consult, for a near matter but for $m=1$, the following preprint \cite{ber}.
\end{remark}

\section{Preliminaries}

This paper is a continuation of \cite{aadl} and \cite{aadl2}. 
For the ease of readability, we gather toghether here 
the notation and some basic results proved in \cite{aadl,aadl2}. 
We also include the celebrated Fourth moment Theorem \cite[Th.11.8.1]{np} 
which is used to get the asymptotic normality.

\subsection{Notation and basic results}
We denote the unit sphere in $\R^{m+1}$ by $S^m$ 
and its volume by $\kappa_m$. 
Concerning integration, 
the variables $s$ and $t$ denote points on $S^m$ and
$ds$ and $dt$ denote the corresponding geometric measure. 
The variables $u$ and $v$ are in $\R^m$ and $du$ and $dv$ are the
associated Lebesgue measure. 
The variables $z$ and $\theta$ are reals and $dz$ and $d\theta$ are the
associated differentials.

We use the Landau's big $O$ and small $o$ notation. 
The set $\N$ of natural numbers contains $0$. 
Besides, $\Const$ will denote a universal constant that may change from a line to another.

\medskip

Lemma 3.1 of \cite{aadl} establishes 
that for an integrable $h:[-1,1]\to\R$ it holds that
\begin{equation}\label{eq:intS}
\int_{S^m\times S^m}h(\<s,t\>)dsdt
=\kappa_m\kappa_{m-1}\int^\pi_0\sin^{m-1}(\theta)h(\cos(\theta))d\theta,
\end{equation}
where $\<\cdot,\cdot\>$ is the usual inner product in $\R^{m+1}$.

% distribution of Yd
Concerning the distribution of $\bY_d$, 
we have
\begin{equation*}
\mathbf \Gamma_d(s,t)
:=\E[{Y_{\ell}(s)Y_{\ell}(t)}]
=\<s,t\>^{d};\quad s,t\in\R^{m+1}.
\end{equation*} 
It follows that 
the distribution of the system $\bY_d$ is invariant under the action of the 
orthogonal group in $\R^{m+1}$. 
For $\ell=1,\ldots,r$, 
we denote by $Y'_\ell(t)$ the derivative (along the sphere) of $Y_\ell(t)$ at the point $t\in S^m$  
and by $Y'_{\ell k}(t)$ its $k$-th component on a given basis of the tangent space of $S^m$ at the point $t$. 
We define the standardized derivative as
\begin{equation}
 \hY_\ell'(t):=\frac{Y_\ell'(t)}{\sqrt{d}},\quad\mbox{and}\quad \bhY'_d(t):=(\hY_1'(t),\ldots,\hY'_r(t)),
\end{equation}
where $\hY_\ell'(t)$ is a row vector. 
In \cite{aadl} it is shown that $(\bY_d(t),\bhY'_d(t))$ is a
vector random field, whose $r(1+m)$ entries are standard normal random variables 
with covariances depending upon the quantities 
\begin{align}\label{eq:deriv}
{\cal A}(\theta)&=-\sqrt{d}\cos^{d-1}(\theta)\sin(\theta),\\
{\cal B}(\theta)&=\cos^{d}(\theta)-(d-1)\cos^{d-2}(\theta)\sin^2(\theta),\notag\\
{\cal C}(\theta)&=\cos^{d}(\theta),\notag\\
{\cal D}(\theta)&=\cos^{d-1}(\theta)\notag.
\end{align}
where $\theta$ is the angle between $s$ and $t$ in $S^m$. 
More precisely, the variance-covariance matrix of the vector  
$$(Y_\ell(s),Y_\ell(t),\hY'_\ell(s),\hY'_\ell(t))$$ 
has the following form
\begin{eqnarray}\label{matrix}
\left[\begin{array}{c|c|c}
A_{11}&A_{12} &A_{13}\\ \hline
A_{12}^{\top}&I_m\,\,&\,A_{23}\\ \hline
A_{13}^{\top}&A_{23}^{\top}\,\,&I_m\\
\end{array}\right],
\end{eqnarray}
where $I_m$ is the $m\times m$ identity matrix, 
$$
A_{11}=\left[\begin{array}{cc}
1&\mathcal{C}\\
\mathcal{C}&1
\end{array}\right], \;
A_{12}= \left[\begin{array}{cccc}
0& 0&\cdots& 0\\
-\mathcal{A}&0& \cdots & 0
\end{array}\right],\:
A_{13}=  \left[\begin{array}{cccc}
\mathcal{A}& 0&\cdots& 0\\
0&0& \cdots & 0
\end{array}\right],$$
and $A_{23}=\diag( \mathcal{B},\mathcal{D},\ldots,\mathcal{D})_{m\times m}.$

Furthermore, the conditional distribution of $(\bhY'_d(s),\bhY'_d(t))$ 
given that $\bY_d(s)=\bY_d(t)=0$ is centered normal 
with variance-covariance matrix
\begin{equation}  \label{e:jm:b}
\left[\begin{array}{c|c}
B_{11}&B_{12} \\ \hline
B_{12}^\top&B_{22} \\
\end{array}\right],
\end{equation}
with $B_{11}=B_{22}= \diag( \sigma^2, 1,\dots,1)_{m\times m}$ and 
$B_{12}= \diag( \sigma^2 \rho, \mathcal{D},\dots,\mathcal{D})_{m\times
m}$. 
Here, 
\begin{equation*}
\sigma^2(\theta)=1-\frac{{\cal A}(\theta)^2}{1-{\cal C}(\theta)^2};\quad
\rho(\theta)=\frac{{\cal B}(\theta)(1-{\cal C}(\theta)^2)-{\cal A}(\theta)^2{\cal C}(\theta)}{1-{\cal C}(\theta)^2-{\cal A}(\theta)^2}.
\end{equation*}

\medskip

Finally, let us retrieve from \cite{aadl} 
the asymptotics and bounds for these quantities 
after scaling $\theta=z/\sqrt{d}$.
\begin{lemma}[\cite{aadl}]\label{l:bounds}
There exists $0<\alpha<\frac12$ such that for $\frac{z}{\sqrt{d}}<\frac{\pi}{2}$ 
it holds that:
\begin{align*}
\left|{\cal A}\right| &\leq z\exp(-\alpha z^2);
&\left|{\cal B}\right| &\leq (1+z^2)\exp(-\alpha z^2);\\
\left|{\cal C}\right| &\leq \left|{\cal D}\right|\leq \exp(-\alpha z^2);
&1-{\cal C}^2 &\geq \Const(1 - \exp(-2\alpha z^2));\\
0\leq 1-\sigma^2 &\leq \Const\cdot\exp(-2\alpha z^2);
&|\rho| &\leq \Const\cdot(1+z^2)^2\exp(-2\alpha z^2). 
\end{align*}
All the functions on the l.h.s. are evaluated at $\theta=z/\sqrt{d}$.
  \hfill $\blacksquare$
\end{lemma}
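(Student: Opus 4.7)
The plan is to derive every estimate from a single calculus inequality. For any $\alpha_0 < 1/2$,
\[
\cos\theta \le \exp(-\alpha_0 \theta^2) \quad \text{on } [0, \pi/2),
\]
because $\phi(\theta) = \log\cos\theta + \alpha_0\theta^2$ vanishes at $0$ and has $\phi'(\theta) = -\tan\theta + 2\alpha_0\theta < 0$ on $(0, \pi/2)$ (using $\tan\theta/\theta > 1 > 2\alpha_0$). Raising to integer powers and setting $\theta = z/\sqrt{d}$ gives $\cos^k(z/\sqrt{d}) \le e^{-\alpha_0 z^2 k/d}$; for $k \in \{d-2, d-1, d\}$ and $d$ large the factor $k/d$ is essentially $1$, so $\cos^k(z/\sqrt{d}) \le e^{-\alpha_1 z^2}$ for any $\alpha_1 < \alpha_0$. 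I would then fix $\alpha < \alpha_1/2 < 1/4$ as the constant of the lemma, preserving a positive gap $\alpha_1 - 2\alpha$ to absorb polynomial factors via $y^k e^{-(\alpha_1 - 2\alpha) y^2} \le \Const$.

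The bounds on $|\mathcal{C}|$ and $|\mathcal{D}|$ are immediate. For $|\mathcal{A}| = \sqrt{d}\cos^{d-1}\theta|\sin\theta|$, use $|\sin\theta| \le \theta$ to get $|\mathcal{A}| \le z e^{-\alpha z^2}$. For $\mathcal{B}$, the identity $\mathcal{B} = \cos^{d-2}\theta(1 - d\sin^2\theta)$, obtained from $\cos^d\theta = \cos^{d-2}\theta(1 - \sin^2\theta)$, together with $d\sin^2\theta \le z^2$ gives $|\mathcal{B}| \le (1+z^2) e^{-\alpha z^2}$. For $1 - \sigma^2 = \mathcal{A}^2/(1-\mathcal{C}^2)$, non-negativity holds because $\sigma^2$ is a conditional variance; the upper bound combines $\mathcal{A}^2 \le z^2 e^{-2\alpha_1 z^2}$ with $1 - \mathcal{C}^2 = 1 - \cos^{2d}\theta \ge 1 - e^{-2\alpha_1 z^2} \ge c\min(z^2, 1)$ (using $1 - e^{-x} \ge c\min(x, 1)$), and the resulting $\max(1, z^2)$ factor is absorbed by the exponent gap.

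For $|\rho|$ the crucial ingredient is the algebraic identity
\[
\mathcal{B}(1 - \mathcal{C}^2) - \mathcal{A}^2 \mathcal{C} = \cos^{d-2}\theta\cdot\bigl[1 - \cos^{2d}\theta - d\sin^2\theta\bigr],
\]
verified by expanding both sides using the simplified form $\mathcal{B} = d\cos^d\theta - (d-1)\cos^{d-2}\theta$ and $\mathcal{A}^2\mathcal{C} = d\cos^{3d-2}\theta\sin^2\theta$. This factorization extracts a common $\cos^{d-2}\theta$ giving $e^{-\alpha_1 z^2}$ decay, while the bracket is bounded crudely by $2(1+z^2)$. The denominator factors as $1-\mathcal{C}^2 - \mathcal{A}^2 = \sigma^2(1-\mathcal{C}^2)$ and is bounded below by the product of the earlier lower bounds; one also needs $\sigma^2 \ge c\min(z^2,1)$, which follows from Taylor expansion at $z=0$ together with $\sigma^2 \to 1$ for large $z$. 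In the region where $(1+z^2)^2 e^{-2\alpha z^2} \ge 1$ the trivial $|\rho| \le 1$ suffices; for larger $z$ the explicit quotient combined with the elementary inequality $(1+z^2) e^{-\alpha_1 z^2} \le (1+z^2)^2 e^{-2\alpha z^2}$ (valid because $\alpha_1 > 2\alpha$ forces $e^{-(\alpha_1 - 2\alpha) z^2} \le 1 \le 1+z^2$) completes the argument.

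The principal obstacle is producing the exponent $-2\alpha$ in the $|\rho|$ estimate. A direct triangle-inequality bound on $\mathcal{B}(1-\mathcal{C}^2) - \mathcal{A}^2\mathcal{C}$ only yields $e^{-\alpha z^2}$ decay, because the bracket itself has no exponential decay in $z$ (it grows like $-z^2$ for large $z$). The identity above, which factors a single $\cos^{d-2}\theta$ out of the difference, is indispensable for recovering the full rate, at the cost of fixing $\alpha$ strictly less than $\alpha_1/2$ and hence strictly less than $1/4$.
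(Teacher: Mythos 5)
The paper itself does not prove this lemma --- it is imported verbatim from \cite{aadl} --- but your derivation is correct and follows the same standard route: the single inequality $\cos\theta\le e^{-\alpha_0\theta^2}$ on $[0,\pi/2)$, the factorizations ${\cal B}=\cos^{d-2}\theta\,(1-d\sin^2\theta)$ and ${\cal B}(1-{\cal C}^2)-{\cal A}^2{\cal C}=\cos^{d-2}\theta\,(1-\cos^{2d}\theta-d\sin^2\theta)$ (both of which I checked), and the fallback $|\rho|\le 1$ on bounded ranges of $z$. The only soft spot is the asserted uniform-in-$d$ lower bound $\sigma^2\ge c\min(z^2,1)$, which a pointwise Taylor expansion at $z=0$ plus the large-$z$ limit does not by itself deliver; but it is not load-bearing, since on the region where you actually invoke the quotient you already have $1-{\cal C}^2-{\cal A}^2\ge 1-(1+z^2)e^{-2\alpha_1 z^2}\ge c>0$.
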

\begin{lemma}[\cite{aadl}]\label{l:limits}
As 
$d\to +\infty$, it holds that
\[
\cos^{2d}\Big( \frac z{\sqrt d}\Big)\to\exp(-z^2);\quad
\mathcal{A} \to -z  \exp(-z^2/2);
\]
\[
\mathcal{B} \to (1-z^2) \exp(-z^2/2);\quad
\mathcal{C}, \mathcal{D} \to\exp(-z^2/2);
\]
\begin{eqnarray*}
  \sigma^2\Big(\frac z{\sqrt d}\Big) &\to& \frac{1-(1+z^2) \exp(-z^2)}{1- \exp(-z^2)}; \\ 
 \rho  \Big(\frac z{\sqrt d}\Big)  &\to&   \frac{(1 - z^2 - \exp(-z^2))  \exp(-z^2/2) }
   {1-(1+z^2)\exp(-z^2)}. 
\end{eqnarray*}
\hfill $\blacksquare$
\end{lemma}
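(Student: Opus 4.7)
All the assertions are pointwise limits as $d\to\infty$ of elementary trigonometric expressions evaluated at $\theta=z/\sqrt d$, so the strategy is essentially a Taylor expansion plus the standard limit $(1+x/d)^d\to e^x$. I fix $z\in\R$ and assume $d$ is large enough that $z/\sqrt d\in[0,\pi/2)$.

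The plan is to handle the four primitive quantities $\mathcal{C}$, $\mathcal{D}$, $\mathcal{A}$, $\mathcal{B}$ first, and then obtain $\sigma^2$ and $\rho$ by continuous substitution in their defining rational expressions. For the base limit I would take logarithms: using $\log\cos(x)=-x^2/2+O(x^4)$ near $0$, I get
\[
d\log\cos\!\Big(\tfrac{z}{\sqrt d}\Big)=d\Big(-\tfrac{z^2}{2d}+O(d^{-2})\Big)=-\tfrac{z^2}{2}+O(d^{-1}),
\]
so $\mathcal{C}(z/\sqrt d)=\cos^d(z/\sqrt d)\to \exp(-z^2/2)$. The same computation with $d-1$ or $2d$ in place of $d$ gives $\mathcal{D}(z/\sqrt d)\to \exp(-z^2/2)$ and $\cos^{2d}(z/\sqrt d)\to \exp(-z^2)$.

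For $\mathcal{A}$, I would write $\sin(z/\sqrt d)=z/\sqrt d+O(d^{-3/2})$, so
\[
\mathcal{A}\!\Big(\tfrac{z}{\sqrt d}\Big)=-\sqrt d\,\cos^{d-1}\!\Big(\tfrac{z}{\sqrt d}\Big)\cdot\Big(\tfrac{z}{\sqrt d}+O(d^{-3/2})\Big)=-z\cos^{d-1}\!\Big(\tfrac{z}{\sqrt d}\Big)+O(d^{-1})\longrightarrow -z\exp(-z^2/2).
\]
For $\mathcal{B}$, I use $\sin^2(z/\sqrt d)=z^2/d+O(d^{-2})$ and the preceding limit for $\cos^{d-2}$:
\[
(d-1)\cos^{d-2}\!\Big(\tfrac{z}{\sqrt d}\Big)\sin^2\!\Big(\tfrac{z}{\sqrt d}\Big)=\Big(z^2+O(d^{-1})\Big)\cos^{d-2}\!\Big(\tfrac{z}{\sqrt d}\Big)\longrightarrow z^2\exp(-z^2/2),
\]
so $\mathcal{B}(z/\sqrt d)\to(1-z^2)\exp(-z^2/2)$.

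Finally, for $\sigma^2$ and $\rho$ I would simply observe that both are continuous rational functions of $\mathcal{A}$, $\mathcal{B}$, $\mathcal{C}$ as long as the denominators $1-\mathcal{C}^2$ and $1-\mathcal{C}^2-\mathcal{A}^2$ stay away from zero. Using the limits above, the denominators tend to $1-\exp(-z^2)$ and $1-(1+z^2)\exp(-z^2)$ respectively, both strictly positive for $z\ne 0$, so the continuous mapping theorem applied componentwise yields the claimed limits of $\sigma^2(z/\sqrt d)$ and $\rho(z/\sqrt d)$; the case $z=0$ is handled by a direct expansion of the numerator and denominator at the same order in $z$. There is no real obstacle here: the only point that deserves a moment of care is making sure the $O(d^{-2})$ remainder in $\log\cos$ really is uniform in a neighborhood of $0$ (which follows from the smoothness of $\log\cos$ there), so that multiplying by $d$ gives $O(d^{-1})\to 0$.
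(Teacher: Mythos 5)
The paper itself offers no proof of this lemma --- it is quoted from \cite{aadl} with only the remark that it follows ``from the definitions'' --- so your Taylor-expansion strategy is exactly the intended argument, and your treatment of $\cos^{2d}$, $\mathcal{A}$, $\mathcal{B}$, $\mathcal{C}$, $\mathcal{D}$ and of $\sigma^{2}$ is correct. The problem is the last step, which is precisely the one you do not carry out. You assert that continuous substitution ``yields the claimed limit'' for $\rho$, but it does not. Plugging $\mathcal{A}^{2}\to z^{2}e^{-z^{2}}$, $\mathcal{B}\to(1-z^{2})e^{-z^{2}/2}$, $\mathcal{C}\to e^{-z^{2}/2}$ into the definition $\rho=\bigl(\mathcal{B}(1-\mathcal{C}^{2})-\mathcal{A}^{2}\mathcal{C}\bigr)/\bigl(1-\mathcal{C}^{2}-\mathcal{A}^{2}\bigr)$ gives numerator
$e^{-z^{2}/2}\bigl[(1-z^{2})(1-e^{-z^{2}})-z^{2}e^{-z^{2}}\bigr]=e^{-z^{2}/2}\bigl(1-z^{2}-e^{-z^{2}}\bigr)$,
hence
$$
\rho\Big(\frac z{\sqrt d}\Big)\;\longrightarrow\;\frac{\bigl(1-z^{2}-e^{-z^{2}}\bigr)e^{-z^{2}/2}}{1-(1+z^{2})e^{-z^{2}}},
$$
which is not the displayed expression $\dfrac{(1-z^{2})^{2}(1-e^{-z^{2}})e^{-z^{2}}}{1-(1+z^{2})e^{-z^{2}}}$. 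The two cannot coincide: the limit above tends to $-1$ as $z\to0$ (the expected Rolle-type degeneracy of the conditional correlation of the derivatives as $s\to t$) and vanishes at $z=1$, whereas the displayed formula behaves like $2/z^{2}$ near $z=0$ and so cannot even be the limit of a correlation. (The displayed formula is instead the limit of $\mathcal{B}^{2}(1-\mathcal{C}^{2})/(1-\mathcal{C}^{2}-\mathcal{A}^{2})$, which suggests a transcription error from \cite{aadl} or a different normalization of $\rho$ there.)

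So either the statement contains a typo, in which case your proof should derive and record the corrected limit, or $\rho$ in the lemma means something other than the quantity defined just above it, in which case your continuity argument is applied to the wrong function. Either way, writing ``the continuous mapping theorem yields the claimed limits'' without performing the substitution hides the only nontrivial content of the lemma. Two smaller points: the case $z=0$ should simply be excluded rather than ``handled by expansion,'' since $\sigma^{2}(\theta)$ and $\rho(\theta)$ are themselves undefined at $\theta=0$ for every fixed $d$ (the denominator $1-\mathcal{C}^{2}$ vanishes); and for the continuity argument you should note explicitly that $1-(1+z^{2})e^{-z^{2}}>0$ for $z\neq0$, which you do. Carry out the algebra for $\rho$ explicitly and reconcile the result with the statement.
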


\subsection{Fourth Moment Theorem} 
We present here the well known Fourth Moment Theorem 
which helps us in the proof of Theorem \ref{tcl}.

Let $\bB=\{B(\lambda):\lambda\geq0\}$ be a standard Brownian motion 
defined on some probability space $(\Omega,\FF,\Pb)$ 
where $\FF$ is the $\sigma$-algebra generated by $\bB$. 
The Wiener chaos is an orthogonal decomposition of $L^2(\bB)= L^2(\Omega,\FF,\Pb)$: 
$$
L^2(\bB)=\bigoplus^\infty_{q=0}\CC_q,
$$ 
where 
$\CC_0=\R$ and for $q\geq 1$, $\CC_q=\{I^{\bB}_q(f_q):f_q\in L^2_s([0,\infty)^q)\}$ 
where $I^{\bB}_q$ is the $q$-folded multiple integral w.r.t. $\bB$ and $L^2_s([0,\infty)^q)$ 
the space of kernels $f_q:[0,\infty)^q\to\R$ which are square integrable and 
symmetric, that is, if $\pi$ is a permutation then 
$f_q(\lambda_1,\dots,\lambda_q)=f_q(\lambda_{\pi(1)},\dots,\lambda_{\pi(q)})$.  
Equivalently, each square integrable functional $F$ of the Brownian motion $\bB$ 
can be written as a sum of orthogonal random variables
\begin{equation*}
F=\E[F]+\sum^\infty_{q=1}I^{\bB}_q(f_{q}),
\end{equation*}
for some uniquely determined kernels $f_q\in L_s^2([0,\infty)^q)$.

Let $f_q,g_q\in L_s^2([0,\infty)^q)$, 
then for  $n=0,\dots,q$ we define the contraction by
\begin{multline}\label{eq:cont}
f_q\otimes_ng_q(\lambda_1,\dots,\lambda_{2q-2n})
=\int_{[0,\infty)^n}f_q(z_1,\dots,z_n,\lambda_1,\dots,\lambda_{q-n})\\
\cdot g_q(z_1,\dots,z_n,\lambda_{q-n+1},\dots,\lambda_{2q-2n})dz_1\dots
  dz_n.\nonumber
\end{multline}

Now, we can state the generalization of the Fourth Moment Theorem.
\begin{theorem}[\cite{pta} Theorem 11.8.3]\label{teo:4o}
Let $F_d$ be in $L_s^2(\bB)$ admit chaotic expansions
\begin{equation*}
F_d=\E[F_d]+\sum^\infty_{q=1}I_q(f_{d,q})
\end{equation*}
for some kernels $f_{d,q}$. 
Then, if $\E[F_d]=0$ and
\begin{enumerate}
  \item for each fixed $q\geq 1$,
    $\Var(I_{q}(f_{d,q}))\xrightarrow[d\to\infty]{}V_q$;

\item $V:=\sum^\infty_{q=1}V_q<\infty$;

\item for each $q\geq 2$ and $n=1,\dots,q-1$, 
$$
\lim_{d\to\infty}\|f_{d,q}\otimes_n f_{d,q}\|_{L_s^2([0,\infty)^{2q-2n})} = 0;
$$

\item $\lim_{Q\to\infty}\limsup_{d\to\infty}\sum^\infty_{q=Q+1}\Var(I_{q}(f_{d,q}))=0$.
\end{enumerate}
Then, $F_d$ converges in distribution towards the $N(0,V)$ distribution. 
 \hfill $\blacksquare$ 
\end{theorem}

\section{Proof of Theorem \ref{tcl}}
Though the main lines of the proof are the same as in \cite{aadl2}, 
there are some important technical differences 
since the volume of the level set is a more complicated object than 
the number of roots in the square case. 
We begin this section with an outline of the proof 
and a description of the main technical differences with \cite{aadl2}.

\begin{itemize}
\item 
First, we obtain the Hermite or Wiener-chaos expansion 
of the standardized $(m-r)$-volume of the zero level set of $\bY_d$ on $S^m$. 

In \cite{aadl2}, the proof of the expansion profited 
of the fact that the number of roots of the system 
is locally constant and of 
B\'ezout's bound for it. 
In the present case, to obtain the chaotic expansion of the volume of the zero level set 
we need to change our arguments to deal with co-area and Rice formulas, 
see Lemma \ref{l:exvol}.
In particular, we need to state the continuity of the volume of the level sets 
with respect to the level. 
this requires a careful use of Gaussian regression.

\item In order to get the limit variance of the $q$-th chaotic component 
we obtain a domination depending on $q$ based on Mehler's formula and in Lemma \ref{l:bounds} above. 
In \cite{aadl} a global domination was obtained 
from the analysis of the limit variance of the number of roots of the system.

The asymptotic normality is obtained in the same way as in \cite{aadl2}. 
In particular, we deduce that the sufficient condition to get 
the asymptotic normality does not depend on the number of equations $r$ 
but on the covariances of the entries of $\bY_d$.

\item We use a convenient partition of the sphere and the existence of a local limit process 
in order to prove the negligibility (of the variance) 
of the tail of the expansion. 
\end{itemize}

\subsection{Hermite expansion of the Volume}
In this part we obtain the Hermite expansion of $\VV_{\bY_d}(\mathbf
0)$; the $(m-r)$-volume of
the zero level set. 

Let
$\delta_\mathbf 0(\mathbf
y)=\prod_{\ell=1}^r\delta_0(y_\ell)$ be the Dirac delta distribution 
for $\mathbf y\in\R^r$.

For $\varepsilon>0$ consider an approximation 
$\frac1{\varepsilon^r}\varphi(\frac{\mathbf y}{\varepsilon})$ 
to Dirac's delta distribution, where we assume that  $\varphi$ is a continuous density function with bounded support. 
Consider also the  function $f$ defined as 
\begin{equation}\label{d:f}
f(\mathbf y')=\sqrt{\det(\mathbf y'(\mathbf y')^{\top})}, 
\end{equation}
where $\mathbf y'=(y'_1;\ldots;y'_r)
= ( y'_{11},y'_{12},\ldots, y'_{1m}; y'_{21},\ldots,y'_{2m};\ldots,\ldots; y'_{r1},\ldots,y'_{rm})$ 
is an $m\times r$ matrix and $(\mathbf{y'})^\top$ its transpose. 
Sometimes, according to convenience, we understand $\mathbf y'$ as a vector in $\R^{r\times m}$.  
Furthermore, for any $\gamma>0$ let 
\begin{equation}\label{d:fgamma}
 f_\gamma(\mathbf y')=f( \gamma y'_{11},y'_{12},\ldots, y'_{1m};\gamma y'_{21},\ldots,y'_{2m};\ldots,\ldots;\gamma y'_{r1},\ldots,y'_{rm}).
\end{equation}
Thus $f=f_1$.

\begin{remark}
 The function $f_\gamma$ plays a key role 
 in the alternative proof of the limit variance 
 of the volume of the zero set of $\bY_d$ 
 since it allows us to deal with the non-homogeneity of $f$ w.r.t. 
 the first column in $\mathbf y$,
 see Appendix.
\end{remark}

Similarly to the case of the number of roots \cite{aadl}, 
we can obtain the following convergence in $L^2(\bB)$.
As said above, the expansion for the volume of the zero level set 
is more subtle than that of the number of roots 
in the square case.

\begin{lemma}\label{l:exvol}
Almost surely and in the $L^2$ sense it holds that
$$
\VV_{\bY_d}(\mathbf 0)=d^{\frac r2}\lim_{\varepsilon\to0}\frac1{\varepsilon^r}
\int_{S^m}\varphi\left(\frac{\mathbf Y_d(t)}{\varepsilon}\right)f(\overline Y'_1( t),\ldots,\overline Y'_r( t))d\mathbf t.
$$ 
\end{lemma}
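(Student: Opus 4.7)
The plan is to apply the coarea formula on $S^m$ to the random submersion $\bY_d:S^m\to\R^r$, combined with a $\delta$-approximation argument. By the Sard-type argument recalled in the excerpt, $\mathbf 0$ is a.s.\ a regular value of $\bY_d$, so $\bY_d^{-1}(\mathbf 0)$ is a smooth $(m-r)$-submanifold of $S^m$, and extending the notation we may write $\VV_{\bY_d}(\mathbf y)$ for the $(m-r)$-volume of $\bY_d^{-1}(\mathbf y)$. The coarea formula then gives, for any bounded continuous $h:\R^r\to\R$,
$$
\int_{S^m} h(\bY_d(t))\sqrt{\det(\bY'_d(t)\bY'_d(t)^T)}\,dt = \int_{\R^r} h(\mathbf y)\,\VV_{\bY_d}(\mathbf y)\,d\mathbf y.
$$
Using $\bY'_d(t)=\sqrt d\,\bhY'_d(t)$ we have $\sqrt{\det(\bY'_d(t)\bY'_d(t)^T)}=d^{r/2}f^r(\bhY'_d(t))$; choosing $h(\mathbf y)=\varepsilon^{-r}\varphi(\mathbf y/\varepsilon)$ and changing variables $\mathbf z=\mathbf y/\varepsilon$ yields
$$
I_\varepsilon := \frac{d^{r/2}}{\varepsilon^r}\int_{S^m}\varphi\!\left(\frac{\bY_d(t)}{\varepsilon}\right)f^r(\bhY'_d(t))\,dt = \int_{\R^r}\varphi(\mathbf z)\,\VV_{\bY_d}(\varepsilon\mathbf z)\,d\mathbf z.
$$

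For the almost sure convergence, regularity of $\mathbf 0$ and the implicit function theorem make $\mathbf y\mapsto \VV_{\bY_d}(\mathbf y)$ continuous at $\mathbf 0$ (a.s.). Since $\varphi$ is a bounded, compactly supported density of unit mass, dominated convergence yields $I_\varepsilon\to \VV_{\bY_d}(\mathbf 0)$ almost surely.

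To upgrade to $L^2$, I would show $\E[I_\varepsilon^2]\to\E[\VV_{\bY_d}(\mathbf 0)^2]$. By Fubini,
$$
\E[I_\varepsilon^2]=d^r\!\int_{(S^m)^2}\!\frac{1}{\varepsilon^{2r}}\E\!\left[\varphi\!\left(\tfrac{\bY_d(s)}{\varepsilon}\right)\!\varphi\!\left(\tfrac{\bY_d(t)}{\varepsilon}\right)\!f^r(\bhY'_d(s))f^r(\bhY'_d(t))\right]ds\,dt.
$$
Conditioning on $(\bY_d(s),\bY_d(t))=(\mathbf y_1,\mathbf y_2)$, the mollifier $\varepsilon^{-2r}\varphi(\mathbf y_1/\varepsilon)\varphi(\mathbf y_2/\varepsilon)$ converges weakly to $\delta_{\mathbf 0}\otimes\delta_{\mathbf 0}$. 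For $s\neq t$ the joint density $p_{\bY_d(s),\bY_d(t)}$ is smooth at $(\mathbf 0,\mathbf 0)$ and the conditional expectation of the Jacobians is continuous there, so the integrand converges pointwise to the Kac--Rice density
$$
K_d(s,t):=d^r\,\E\!\left[f^r(\bhY'_d(s))f^r(\bhY'_d(t))\,\big|\,\bY_d(s)=\bY_d(t)=\mathbf 0\right]p_{\bY_d(s),\bY_d(t)}(\mathbf 0,\mathbf 0),
$$
whose integral over $(S^m)^2$ is exactly the Kac--Rice second moment $\E[\VV_{\bY_d}(\mathbf 0)^2]<\infty$ (as in \cite{aadl}). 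Dominated convergence then gives $\E[I_\varepsilon^2]\to\E[\VV_{\bY_d}(\mathbf 0)^2]$, which combined with the a.s.\ convergence provides $I_\varepsilon\to \VV_{\bY_d}(\mathbf 0)$ in $L^2(\Omega)$.

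The main obstacle is producing an $\varepsilon$-uniform integrable majorant for the integrand of $\E[I_\varepsilon^2]$ near the diagonal $s=t$, where $(\bY_d(s),\bY_d(t))$ becomes nearly degenerate and the formal Kac--Rice density is a priori singular. Exactly as in the square case of \cite{aadl}, this is handled by the local scaling $\theta=z/\sqrt d$ together with the exponential bounds of Lemma \ref{l:bounds}, which give uniform control over both the conditional expectation of the product of Jacobians and the joint Gaussian density.
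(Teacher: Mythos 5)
Your coarea-formula setup and the almost sure convergence argument match the paper's. Where you diverge is the $L^2$ step, and there your proposal has a real gap at exactly the point you yourself flag as ``the main obstacle''. You want to pass to the limit in $\E[I_\varepsilon^2]$ by dominated convergence on the double integral over $S^m\times S^m$, which requires a majorant of $\varepsilon^{-2r}\E\left[\varphi(\bY_d(s)/\varepsilon)\varphi(\bY_d(t)/\varepsilon)f^r(\bhY'_d(s))f^r(\bhY'_d(t))\right]$ that is uniform in $\varepsilon$ and integrable near the diagonal. Lemma \ref{l:bounds} does not deliver this: its bounds (and the quantities $\sigma^2$, $\rho$) describe the conditional law of the derivatives given $\bY_d(s)=\bY_d(t)=\mathbf 0$, whereas your integrand involves the conditional law at all levels $\mathbf y$ of size $O(\varepsilon)$ ranging over the support of the mollifier. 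At nonzero levels the conditional mean acquires the extra term $-\mathbf v(\<t,s\>)(1+\<t,s\>^{d})^{-1}u_{\ell}$, and one must control the resulting conditional moments and the joint density uniformly in the levels and in $\dist(s,t)$ near $0$ (and near $\pi$, where $1+\<t,s\>^{d}$ can degenerate). None of this is impossible, but it is the actual content of the step, and you have only asserted it by appeal to bounds that address a different (zero-level) object.

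The paper avoids the uniform majorant altogether. It sets $\mathcal Q_\varepsilon=\int_{\R^r}\varphi(\mathbf u)\VV_{\bY_d}(\varepsilon\mathbf u)\,d\mathbf u$, bounds $\E[\VV_{\bY_d}(\varepsilon\mathbf u_1)\VV_{\bY_d}(\varepsilon\mathbf u_2)]$ by Cauchy--Schwarz, and thereby reduces everything to continuity of the single-level second moment $\mathbf u\mapsto\E[\VV_{\bY_d}(\mathbf u)^2]$, which it verifies by an explicit Gaussian regression (this is where the level-$\mathbf u$ conditional law is computed, once, with no uniformity in $\varepsilon$ needed). That gives $\limsup_{\varepsilon\to0}\E[\mathcal Q_\varepsilon^2]\le\E[\VV_{\bY_d}(\mathbf 0)^2]$; Fatou's lemma combined with the a.s.\ convergence gives the reverse inequality, the same sandwich handles the cross term, and expanding $\E[(\mathcal Q_\varepsilon-\VV_{\bY_d}(\mathbf 0))^2]$ finishes the proof. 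If you want to keep your direct dominated-convergence route you must actually construct the $\varepsilon$-uniform Kac--Rice majorant near the diagonal; otherwise the cleaner fix is to adopt the Cauchy--Schwarz/Fatou argument, which only needs the continuity statement the paper proves.
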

\begin{proof}
By the co-area formula we have
$$
\frac1{\varepsilon^r}\int_{\R^r}\varphi\left(\frac{\mathbf u}{\varepsilon}\right)
\VV_{\bY_d}(\mathbf u)d\mathbf u=\frac1{\varepsilon^r}\int_{S^m}
\varphi\left(\frac{\mathbf Y_d(t)}{\varepsilon}\right)f(\overline Y'_1(t),\ldots,\overline Y'_r( t))dt,
$$
where $\VV_{\bY_d}(\mathbf u)$ stands for the $(m-r)$-volume of the level set 
  $\{t\in S^m:\, \mathbf Y_d(t)=\mathbf u\}$. 
Changing the variable in the left hand side integral we can write
$$
\mathcal Q_\varepsilon:=\int_{\R^r}\varphi(\mathbf u)\VV_{\bY_d}(\varepsilon\mathbf u)d\mathbf u=\frac1{\varepsilon^r}\int_{  S^m}
\varphi\left(\frac{\mathbf Y_d(t)}{\varepsilon}\right)f(\overline Y'_1( t),\ldots,\overline Y'_r( t)d\mathbf t,
$$
we need to prove  the convergence in $L^2(\bB)$ for this sequence. 
Let us evaluate
\begin{eqnarray}\label{convergencia}\E[(\mathcal Q_\varepsilon-\mathcal V_{\bY_d}(\mathbf 0))^2]=\E[\mathcal Q^2_\varepsilon]-2\E[\mathcal 
Q_\varepsilon\mathcal V_{\bY_d}(\mathbf 0)]+\E[\mathcal V^2_{\bY_d}(\mathbf 0)].\end{eqnarray}
But
$$\E[\mathcal Q^2_\varepsilon]=\int_{\R^r\times\R^r}\varphi(\mathbf u_1)\varphi(\mathbf u_2)
\E[\VV_{\bY_d}(\varepsilon{\mathbf u}_1)\VV_{\bY_d}(\varepsilon{\mathbf u}_2)]d\mathbf u_1d\mathbf u_2$$
and
$$\E[\mathcal Q_\varepsilon\mathcal V_{\bY_d}(\mathbf 0)]
=\int_{\R^r}\varphi(\mathbf u_1)\E[\VV_{\bY_d}(\varepsilon{\mathbf u}_1)\VV_{\bY_d}(\mathbf 0)]d\mathbf u_1.$$
Using the Cauchy-Schwarz inequality we have
\begin{equation}\label{r.h.s.}
\E[\VV_{\bY_d}(\varepsilon{\mathbf u}_1)\VV_{\bY_d}(\varepsilon{\mathbf u}_2)]
\le (\E[\VV_{\bY_d}(\varepsilon{\mathbf u}_1)^2]\E[\VV_{\bY_d}(\varepsilon{\mathbf u}_2)^2])^{\frac12}.
\end{equation}

Furthermore, 
below we show that the right hand side is a continuous function in the variable $\mathbf u$, obtaining
\[
\lim_{\varepsilon\to0}\E[\mathcal Q^2_\varepsilon]\le \E[\mathcal V^2_{\bY_d}(\mathbf 0)].
\]

Moreover, given that the process satisfies the hypothesis of Proposition
  6.12 of Aza\"{\i}s \& Wschebor book \cite{aw}, it holds that $$\mathbb
  P\{\exists t:\, \mbox{rank}(\mathbf Y_d'(t))<r,\,\mathbf Y_d(t)=\mathbf u\}=0.$$ 
Thus by using the implicit function theorem we have that the function $\VV_{\bY_d}(\cdot)$ is a.s. continuous and by a classical result
\[
\mathcal V_{\bY_d}(\mathbf 0)=d^{\frac r2}\lim_{\varepsilon\to0}\frac1{\varepsilon^r}\int_{ S^m}
\varphi\left(\frac{\mathbf Y_d(t)}{\varepsilon}\right)f(\overline Y'_1(t),\ldots,\overline Y'_r(t)dt\mbox{ a.s.}
\]
In this form by Fatou's Lemma we get
$$
  \E[\mathcal V^2_{\bY_d}(\mathbf 0)]\le \lim_{\varepsilon\to0}\E[\mathcal Q^2_\varepsilon]\le \E[\mathcal V^2_{\bY_d}(\mathbf 0)].
$$
  The same result can be obtained for the second addend of (\ref{convergencia}), 
in consequence the convergence in quadratic mean holds.

\medskip

It remains to prove that the right hand side of (\ref{r.h.s.}) is a continuous function, we do that in the following. 
This issue is not present in \cite{aadl2}.
By Kac-Rice formula\\
  \begin{equation}\label{eq:ricevol}
\E[(\mathcal V_{\mathbf Y_d}(\mathbf u))^2]
=d^{r}\int_{ S^m\times S^m}
\E[f(\bhY'_d(t))f(\bhY'_d(s))\mid \bY_d(t)=\bY_d(s)=\mathbf u]\,p_{\bY(t),\bY(s)}(\mathbf u,\mathbf u)dtds.
  \end{equation}
Clearly the density $p_{\bY(t),\bY(s)}(\mathbf u,\mathbf u)$ is continuous as a function of $\mathbf u$. 
We deal now with the conditional expectation.

Recall the notation in \eqref{eq:deriv}. 
Let us define the vector ${\mathbf v}(\<t,s\>)=(\mathcal A,0,\ldots,0)^{\top}$, 
a regression model gives that
\begin{eqnarray*}
\hY'_{\ell}(t)=&\mathbf
  v(\<t,s\>)\frac{\<t,s\>^d}{1-\<t,s\>^{2d}}Y_{\ell}(t)-\mathbf
  v(\<t,s\>)\frac{1}{1-\<t,s\>^{2d}}Y_{\ell}(s)+\xi_{\ell1}(t,s),\\
\hY'_{\ell}(s)=&-\mathbf v(\<t,s\>)\frac{1}{1-\<t,s\>^{2d}}Y_\ell(t)+\mathbf v(\<t,s\>)\frac{\<t,s\>^d}{1-\<t,s\>^{2d}}Y_\ell(s)+\xi_{\ell2}(t,s),
\end{eqnarray*}
with $\xi_{\ell1}(t,s),\xi_{\ell2}(t,s)$ centered Gaussian random variables 
independent from $Y_\ell(s)$ and $Y_\ell(t)$. 
In this form we get that the conditional distribution of 
$\left(\overline{Y}'_{\ell}(t),\overline{Y}'_{\ell}(s)\right)$ 
conditioned to $\mathbf Y(t)=\mathbf Y(s)=\mathbf u$ is 
normal with mean 
\[
\begin{pmatrix}\mathbf v(\<t,s\>)(\frac{\<t,s\>^d-1}{1-\<t,s\>^{2d}})u_{\ell}\\
\mathbf v(\<t,s\>)(\frac{\<t,s\>^d-1}{1-\<t,s\>^{2d}})u_{\ell}\end{pmatrix}
=\begin{pmatrix}-\mathbf v(\<t,s\>)(\frac{1}{1+\<t,s\>^{d}})u_{\ell}\\
-\mathbf v(\<t,s\>)(\frac{1}{1+\<t,s\>^{d}})u_{\ell}\end{pmatrix}
\]
and variance-covariance matrix \eqref{e:jm:b}. 
This result implies that the conditional expectation can be expressed
through the following two vectors:
$$
\zeta_1
:=\left(-\mathbf v(\<t,s\>)\cdot\frac{1}{1+\<t,s\>^{d}}\cdot\frac{u_{1}}\sigma+\overline M_1,\ldots,
-\mathbf v(\<t,s\>)\cdot\frac{1}{1+\<t,s\>^{d}}\cdot\frac{u_{r}}\sigma+\overline M_r\right);$$
$$\zeta_2
:=\left(-\mathbf v(\<t,s\>)\cdot\frac{1}{1+\<t,s\>^{d}}\cdot\frac{u_{1}}\sigma+\overline W_1,\ldots,
-\mathbf v(\<t,s\>)\cdot\frac{1}{1+\<t,s\>^{d}}\cdot\frac{u_{r}}\sigma+\overline W_r\right),$$
where the $(r\times m)$-dimensional vectors
$$(\overline M_1,\ldots,\overline M_r):=(  M_{11},\ldots,M_{1m},  M_{21},\ldots,M_{2m},\ldots, M_{r1},\ldots,M_{rm}),$$
$$( \overline W_1,\ldots,\overline W_r):=( W_{11},\ldots,W_{1m},W_{21},\ldots,W_{2m},\ldots, W_{r1},\ldots,W_{rm}),$$
are such that the $M_{lk}$ (resp. $W_{lk}$) are  independent standard Gaussian random variables and 
\[
\E[M_{l_1k_1}W_{l_2k_2}]=\rho\mathbf 1_{\{l_1=l_2,\,k_1=k_2=1\}}+\mathcal D\mathbf 1_{\{l_1=l_2,\,k_1=k_2>1\}}.
\]
In fact, we have
\begin{equation} \label{e:fz}
\E[f(\bhY'_d(t))f(\bhY'_d(s)]\mid \bY_d(t)=\bY_d(s)=\mathbf u)
=\E[f_{\sigma}(\zeta_1)f_{\sigma}(\zeta_2)].
\end{equation}
Note that this is an example of the importance of the function $f_\sigma$.
By the definition of the function $f_\sigma$ and the form 
of the vectors $\zeta_1$ and $\zeta_2$ this term is evidently a continuous function of the 
variable $\mathbf u.$
\end{proof}

Once we have the approximation in Lemma \ref{l:exvol}, 
the expansion follows as in \cite{aadl}. 

To describe it, we introduce the Hermite polynomials $H_n(x)$, $x\in\R$, by $H_0(x)=1$, $H_1(x)=x$ and $H_{n+1}(x)=xH_n(x)-nH_{n-1}(x)$ for $n\geq 
1$. 
The tensorial versions are defined for multi-indices
$\balpha=(\alpha_1,\ldots,\alpha_r)\in\N^r$ and $\bbeta=(\beta_{11},\ldots,\beta_{1m},\ldots,\beta_{r1},\ldots,\beta_{rm})$, by
\[
\mathbf H_{\mathbf \alpha}(\mathbf y)=H_{\alpha_1}(y_1)\ldots H_{\alpha_r}(y_r),
\]
\[
\widetilde{\mathbf H}_{\bbeta}(\mathbf y')=H_{\beta_{11}}(y'_{11})\ldots H_{\beta_{1m}}(y'_{1m})\ldots H_{\beta_{r1}}(y'_{r1})\ldots 
H_{\beta_{rm}}(y'_{rm}).
\]
We denote the coefficients of Dirac's delta distribution in the
Hermite basis of $L^2(\R^{r},\phi_{r}(\mathbf x)d\mathbf x)$ by
$b_{\balpha}$, where $\phi_{k}$ stands for the standard normal density function in $\R^k$. Readily we can show that 
$b_{\balpha}=0$ if at least one index $\alpha_j$ is odd, otherwise 
\begin{equation*}
b_{\balpha}=\frac1{[\frac{\balpha }2]!}\prod_{j=1}^r\frac1{\sqrt{2\pi}}\bigg[-\frac12\bigg]^{[\frac{\alpha_j}2]}. 
\end{equation*} 

Since $f^2$ is a polynomial, $f\in L^2(\R^{r\times m},\phi_{r\times m}(\mathbf y'))d{\mathbf y'}$. 
For $f$ we have
\begin{equation}\label{e:fbeta}
f(\mathbf y')=\sum_{\bbeta}f_{\bbeta}\widetilde{\mathbf  H}_{\bbeta}(\mathbf y'),
\end{equation}
where $\bbeta$ and $\widetilde{\mathbf H}_{\bbeta}$ are as above and
$$
f_{\bbeta}=\frac1{\bbeta!}\int_{\R^{r\times m}}f(\mathbf y')\widetilde{\mathbf H}_{\bbeta}(\mathbf y')\phi_{r\times m}(\mathbf y')d\mathbf y'.
$$

Let us introduce the functions $$ g_q(\mathbf y,\mathbf
y')=\sum_{|\balpha|+|\bbeta|=q} b_{\balpha}f_{\bbeta} \mathbf
H_{\mathbf\alpha}(\mathbf y)\widetilde{\mathbf H}_{\bbeta}(\mathbf y'),
$$ where $(\mathbf y,\mathbf y')\in\R^r\times\R^{r\times m}$.

Thus similarly to \cite{aadl} we can obtain the expansion. 

\begin{proposition}\label{prop:expansion}
With the same notation as above.  We have, in the $L^2$ sense, that 
$$\bar{\VV}_d=\frac{\VV_{\bY_d}(\mathbf 0)-\E[\VV_{\bY_d}(\mathbf 0)]}{d^{\frac r2-\frac m4}}=
  d^{\frac m4}\sum_{q=1}^\infty\int_{S^{m}}g_q( \mathbf
  Y_d(t),\bhY'(t))\,dt.$$
  \hfill $\blacksquare$
  \begin{remark}\label{VR}
The same type of expansion can be obtained with minor modifications if instead of the volume of the zero set over the whole sphere we consider 
the volume 
of the set restricted to a Borel set $\mathcal G\subset S^m$.
\end{remark}
\end{proposition}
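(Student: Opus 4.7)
The plan is to start from the smoothed representation in Lemma \ref{l:exvol}, perform the Hermite expansion at the level of the integrand, and pass to the limit $\varepsilon\to 0$ in $L^2(\Omega)$. Writing $\varphi_\varepsilon(\mathbf y) := \varepsilon^{-r}\varphi(\mathbf y/\varepsilon)$ and
$$V_d^\varepsilon := d^{r/2}\int_{S^m}\varphi_\varepsilon(\bY_d(t))\,f^r(\bhY'_d(t))\,dt,$$
Lemma \ref{l:exvol} gives $V_d^\varepsilon\to\VV_{\bY_d}(\mathbf 0)$ in $L^2(\Omega)$ as $\varepsilon\to 0$, so it suffices to expand $V_d^\varepsilon$ and identify the limit of the expansion.

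At each fixed $t\in S^m$ the block structure of the covariance matrix \eqref{matrix} shows that $\bY_d(t)$ and $\bhY'_d(t)$ are independent standard Gaussian vectors in $\R^r$ and $\R^{r\times m}$ respectively. Expanding each factor separately in its own Hermite basis,
$$\varphi_\varepsilon(\mathbf y)=\sum_\balpha b^\varepsilon_\balpha\,\mathbf H_\balpha(\mathbf y),\qquad f^r(\mathbf y')=\sum_\bbeta f^r_\bbeta\,\widetilde{\mathbf H}_\bbeta(\mathbf y'),$$
with $b^\varepsilon_\balpha:=\frac{1}{\balpha!}\int\varphi_\varepsilon(\mathbf y)\mathbf H_\balpha(\mathbf y)\phi_r(\mathbf y)\,d\mathbf y\to b_\balpha$ as $\varepsilon\to 0$ (by dominated convergence against the smooth Gaussian-polynomial weight), multiplying the two expansions and regrouping by total degree $q=|\balpha|+|\bbeta|$ gives
$$\varphi_\varepsilon(\bY_d(t))\,f^r(\bhY'_d(t)) = \sum_{q\geq 0} g^\varepsilon_q(\mathbf Z_d(t)),$$
where $g^\varepsilon_q$ is defined exactly as $g_q$ but with $b_\balpha$ replaced by $b^\varepsilon_\balpha$. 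Integrating in $t$ and using the orthogonality of distinct Wiener chaoses, the $q=0$ contribution is deterministic and equals $\E[V_d^\varepsilon]$.

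The main obstacle is justifying the interchange of the infinite sum with the $\varepsilon\to 0$ limit, which requires an $L^2(\Omega)$ tail bound on $\sum_{q\geq Q}\int_{S^m}g^\varepsilon_q(\mathbf Z_d(t))\,dt$ that is uniform in both $\varepsilon>0$ and $d$. For each $q$ I would apply the multivariate Mehler formula to write
$$\Var\!\Big(\int_{S^m}g^\varepsilon_q(\mathbf Z_d(t))\,dt\Big) = \int_{S^m\times S^m}\E\big[g^\varepsilon_q(\mathbf Z_d(s))\,g^\varepsilon_q(\mathbf Z_d(t))\big]\,ds\,dt,$$
expand the integrand as a polynomial in the entries of the covariance matrix \eqref{matrix}, use the spherical reduction \eqref{eq:intS}, and apply the rescaling $\theta=z/\sqrt d$ together with the Gaussian-type decay bounds of Lemma \ref{l:bounds} to produce a factor of order $d^{-m/2}$. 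Since $\varphi$ is bounded with compact support and $(f^r)^2$ is a polynomial, the coefficients $b^\varepsilon_\balpha$ and $f^r_\bbeta$ are uniformly controlled and the series is summable in $q$; this is exactly the mechanism of \cite{aadl}. Passing to the $L^2$ limit term-by-term, subtracting $\E[\VV_{\bY_d}(\mathbf 0)]$ and dividing by $d^{r/2-m/4}$ then yields the announced expansion for $\bar{\VV}_d$.
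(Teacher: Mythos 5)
Your overall strategy --- mollify the Dirac delta as in Lemma \ref{l:exvol}, expand $\varphi_\varepsilon$ and $f^r$ in their respective Hermite bases, regroup by total degree, and let $\varepsilon\to0$ --- is exactly the route the paper intends (it gives no details, deferring to the square case in \cite{aadl}), and your observation that $\bY_d(t)$ and $\bhY'_d(t)$ are independent at a fixed $t$ (so the product of the two expansions is the joint expansion) is correct. The one step that does not hold up as written is the justification of the interchange of $\sum_{q}$ with $\lim_{\varepsilon\to0}$: you assert that a tail bound uniform in $\varepsilon$ follows because ``the coefficients $b^\varepsilon_\balpha$ and $f^r_\bbeta$ are uniformly controlled and the series is summable in $q$.'' This is not true at the level of coefficients: since $\varphi_\varepsilon$ approximates a delta, $\sum_\balpha (b^\varepsilon_\balpha)^2\,\balpha! = \|\varphi_\varepsilon\|^2_{L^2(\R^r,\phi_r^{-1}\text{-weighted})}$ blows up like $\varepsilon^{-r}$ as $\varepsilon\to0$, so no bound on the Hermite tail of $\varphi_\varepsilon(\bY_d(t))f^r(\bhY'_d(t))$ that is uniform in $\varepsilon$ can come from coefficient estimates alone; the finiteness only appears after integrating over $S^m\times S^m$ against the decaying Mehler covariances, and making that uniform in $\varepsilon$ is precisely the hard point you would be left with.

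The standard (and intended) repair avoids the uniform tail bound entirely: the chaos projections $\pi_q$ onto $\CC_q$ are orthogonal projections, hence contractions on $L^2(\Omega)$, so the convergence $V^\varepsilon_d\to\VV_{\bY_d}(\mathbf 0)$ in $L^2$ from Lemma \ref{l:exvol} immediately gives $\pi_q(V^\varepsilon_d)\to\pi_q(\VV_{\bY_d}(\mathbf 0))$ for each fixed $q$. For fixed $q$ the projection $\pi_q(V^\varepsilon_d)=d^{r/2}\sum_{|\bmu|=q}b^\varepsilon_\balpha f^r_\bbeta\int_{S^m}\overline{\mathbf H}_{\bmu}(\mathbf Z_d(t))\,dt$ is a finite sum, and the pointwise convergence $b^\varepsilon_\balpha\to b_\balpha$ (which you correctly establish) identifies the limit as $d^{r/2}\int_{S^m}g_q(\mathbf Z_d(t))\,dt$. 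Since $\VV_{\bY_d}(\mathbf 0)\in L^2$ and $L^2(\bB)=\bigoplus_q\CC_q$, the identity $\VV_{\bY_d}(\mathbf 0)=\sum_q\pi_q(\VV_{\bY_d}(\mathbf 0))$ in $L^2$ is then automatic, with no interchange of limits to justify. With this substitution your argument is complete and coincides with the paper's.
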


The rest of the proof of Theorem \ref{tcl} 
consists in the verification of the conditions 
of Theorem \ref{teo:4o} with $F_d=\bar{\VV}_d$.

\subsection{Computing the variance of the $q$-th term}
To compute the variance of the $q$-th term in the expansion 
we use Mehler's formula \cite[L. 10.7]{aw}. 

We have

\begin{align*}
  & \E\left[\left(\int_{S^{m}}g_q( \bY_d(t),\bhY'_d(t))dt\right)^2\right]
=\sum_{ |\balpha|+|\bbeta|  =q}\sum_{
  |\balpha'|+|\bbeta'|=q}b_{\balpha}f_{\bbeta}
\,b_{\balpha'}f_{\bbeta'} \\
  &\qquad\qquad\qquad\times\int_{S^m\times S^m}
\E[\mathbf H_{\balpha}(\mathbf Y(t))\mathbf H_{\balpha'}(\mathbf Y(s))\widetilde{\mathbf H}_{\bbeta}(\overline{\mathbf Y}'(t))
\widetilde{\mathbf H}_{\bbeta'}(\overline{\mathbf Y}'(s))]dtds.
\end{align*}

Recall that the coefficients $b_{\balpha}$ are zero if one of the $\alpha_j$ is odd. 
Furthermore, the function
$f(\mathbf y)$ is even with respect to each column, thus its Hermite coefficients 
$$
f_{\bbeta}=f_{\bbeta_1,\bbeta_2,\ldots,\bbeta_r}=\int_{\R^{r\times m}}\sqrt{\det(\mathbf y'(\mathbf y')^{\top})}
\mathbf H_{\bbeta_1}(y'_1)\ldots \mathbf  H_{\bbeta_r}(y'_r)\phi_{r\times m}(\mathbf y')d\mathbf y',
$$
are zero if at least one of the $\bbeta_\ell$ satisfies $|\bbeta_\ell|=2k+1.$ 
In this form  $|\beta_\ell|=\sum_{j=1}^m\beta_{\ell j}$ is necessarily even. 
Moreover,  $q=|\balpha|+|\bbeta|$ is also even.

By independence we have
\begin{align}
  &\E[\mathbf H_{\balpha}(\mathbf Y(t))\widetilde{\mathbf H}_{\bbeta}(\overline{\mathbf Y}(t))
  \mathbf H_{\balpha'}(\mathbf Y(s))
\widetilde{\mathbf H}_{\bbeta'}(\overline{\mathbf Y}'(s))]\nonumber
  \\
  &\qquad\qquad=\prod_{\ell=1}^r\E[H_{\alpha_\ell}(Y_\ell(t))
  \mathbf H_{\bbeta_\ell}(\overline Y'_\ell(t))H_{\alpha'_\ell}(Y_\ell(t))
\mathbf H_{\bbeta'_\ell}(\overline Y'_\ell(t))]\nonumber
  \\
  &\qquad\qquad=\prod_{\ell=1}^r
\E[H_{\alpha_\ell}(Y_\ell(s))H_{\alpha'_\ell}(Y_\ell(t))
H_{\beta_{\ell 1}}(\overline{Y}'_{\ell 1}(s))H_{\beta'_{\ell
  1}}(\overline{Y}'_{\ell 1}(t))]\nonumber\\
  &\qquad\qquad\qquad\qquad\qquad\qquad\qquad\qquad
 \times\prod^m_{j=2}\E[H_{\beta_{\ell j}}(\overline{Y}'_{\ell j}(s))H_{\beta'_{\ell j}}(\overline{Y}'_{\ell 
j}(t))].  \label{e:mehler}
\end{align}
In the second equality we used that the random vectors:
\[
(Y_\ell(s),Y_\ell(t),\overline{Y}'_{\ell1}(s),\overline{Y}'_{\ell1}(t));\quad
(\overline{Y}'_{\ell j}(s),\overline{Y}'_{\ell j}(t));\quad j\geq2
\] 
are independent.  
Using Mehler's formula \cite[L. 10.7]{aw}, 
we get
$$
\E[H_{\beta_{\ell j}}(\overline{Y}'_{\ell j}(s))H_{\beta'_{\ell j}}(\overline{Y}'_{\ell j}(t))]
=\delta_{\beta_{\ell j}\beta'_{\ell j}} \beta_{\ell j}!\,(\rho''_{\ell j})^{\beta_{\ell j}},
$$
where $\rho''_{\ell j}=\rho''_{\ell j}(\<s,t\>)=
\E[\overline{Y}'_{\ell j}(s)\overline{Y}'_{\ell j}(t)]=\<t,s\>^{d-1}.$ 
Since $\sum_{j=1}^m\beta_{\ell j}$ is even, 
we have that either $\beta_{\ell1}$ is even and then $\sum_{j=2}^m\beta_{\ell j}$ is even too 
or $\beta_{\ell1}$ is odd and in this case $\sum_{j=2}^m\beta_{\ell j}$ is also odd. 

For the first factor in the r.h.s. of \eqref{e:mehler}, 
using again Mehler's formula we get 
\[
\E[H_{\alpha_\ell}(Y_\ell(s))H_{\alpha'_\ell}(Y_\ell(t))
H_{\beta_{\ell 1}}(\overline{Y}'_{\ell 1}(s))H_{\beta'_{\ell 1}}(\overline{Y}'_{\ell 1}(t))]=0,
\]
if $\alpha_\ell+\beta_{\ell 1}\neq \alpha'_{\ell}+\beta'_{\ell 1}$.
Otherwise, consider $\Lambda\subset \N^4$ defined by
\[
\Lambda=\{(d_1,d_2,d_3,d_4):d_1+d_2=\alpha_\ell,\,  
d_3+d_4=\beta_{\ell 1},\, d_1+d_3=\alpha'_\ell,\, d_2+d_4=\beta'_{\ell 1}\};
\]
then
\begin{equation*}
\E[H_{\alpha_\ell}(Y_\ell(s))H_{\alpha'_\ell}(Y_\ell(t))
H_{\beta_{\ell 1}}(\overline{Y}'_{\ell 1}(s))H_{\beta'_{\ell 1}}(\overline{Y}'_{\ell 1}(t))]
=\sum_{(d_i)\in\Lambda}\frac{\alpha_\ell ! \alpha'_\ell ! \beta_{\ell 1} ! \beta'_{\ell 1} !}
{d_1!d_2!d_3!d_4!}
\rho^{d_1}
(\rho')^{d_2}
(\rho')^{d_3}
(\rho'')^{d_4},
\end{equation*}
where $\rho=\rho(\<s,t\>)=\E[Y_\ell(s)Y_\ell(t)]$, 
$\rho'=\E[Y_\ell(s)\overline{Y}'_{\ell 1}(t)]
=\E[\overline{Y}'_{\ell 1}(s)Y_\ell(t)]$ 
and 
$\rho''=\E[\overline{Y}'_{\ell 1}(s)\overline{Y}'_{\ell 1}(t)]$. 
Note that the conditions defining the index set $\Lambda$ implies that the first factor in Equation \eqref{e:mehler} is
$$
\prod^r_{\ell=1}
\sum_{(d_i)\in\Lambda}\frac{\alpha_\ell ! \alpha'_\ell ! \beta_{\ell 1} ! \beta'_{\ell 1} !}
{d_1!d_2!d_3!d_4!}
\rho^{d_1}
(\rho')^{d_2+d_3}
(\rho'')^{d_4}.
$$
Hence, if we change $\<s,t\>$ by $-\<s,t\>$, 
for each $\ell$ 
we have the factor 
\begin{eqnarray*}
(-1)^{dd_1}\cdot(-1)^{(d-1)(d_2+d_3)}\cdot(-1)^{dd_4}
=(-1)^{d(d_1+d_4)+(d-1)(d_2+d_3)}\\
=(-1)^{d\alpha_\ell}(-1)^{d\beta'_{\ell_1}}(-1)^{2(\alpha'_\ell-d_1)}=(-1)^{d\beta'_{\ell_1}}
\end{eqnarray*}

\begin{remark}  \label{invariance} 
Changing $\<t,s\>$ by $-\<t,s\>$ in (\ref{e:mehler}) and  considering each term for 
$j=1,\ldots,r$ of the product, either $\beta'_{\ell_1}$ and 
$\sum_{j=2}^m\beta'_{\ell_j}$ are even and then the sign of this term
  does not change, 
or the two numbers are odd and then they have a minus in front 
and the sign neither change. Thus we get that  the complete sign of (\ref{e:mehler}) does not change.
\end{remark}

Let us now define
\begin{equation}\label{H}
  \tilde{\mathcal
  H}_{qd}(\<t,s\>)=\sum_{|(\balpha,\bbeta)|=q}\sum_{|(\balpha',\bbeta')|=q}a_{\bmu}a_{\bmu'}
\E[\mathbf H_{\balpha}(\mathbf Y(t))\widetilde{\mathbf H}_{\bbeta}(\overline{\mathbf Y}(t))\mathbf H_{\balpha'}(\mathbf Y(s))
\widetilde{\mathbf H}_{\bbeta'}(\overline{\mathbf Y}'(s))].
\end{equation}
Set also, for $t\in S^m$, 
\begin{equation}\label{eq:zeta}
  \bhZ(t)=(Z_1(t),\ldots,Z_{r(1+m)}(t))=(\bY_d(t),\bhY'_d(t)). 
\end{equation}
In this manner we can write
\begin{align*}
  &  d^{\frac m2}\E\left[\left(\int_{ S^{m}}g_q(\mathbf Z_d(t))dt\right)^2\right]
=d^{\frac m2}\int_{S^m\times S^m}\tilde{\mathcal H}_{qd}(\<t,s\>)dtds\\
&\qquad\qquad\quad\quad=\kappa_m\kappa_{m-1}d^{m/2}\int_0^{\pi}\sin^{m-1}(\theta)
\tilde{\mathcal H}_{qd}\left(\cos(\theta)\right) d\theta\\
&\qquad\qquad\quad\quad=2\kappa_m\kappa_{m-1}\int_0^{\sqrt d\pi/2}d^{(m-1)/2}
\sin^{m-1}\left(\frac{z}{\sqrt{d}}\right)\tilde{\mathcal H}_{qd}\left(\cos\left(\frac{z}{\sqrt{d}}\right)\right) dz.
\end{align*}
For the second equality we used \eqref{eq:intS} about the integration on the sphere of an 
invariant by rotations function. In the third equality we 
use (deduced from Remark \ref{invariance}) the invariance of the function with respect to the change of variable $\varphi=\frac{\pi}2-\theta$ and 
finally we made $\theta=\frac{z}{\sqrt d}.$

The convergence follows by  dominated convergence using for the covariances 
$\rho_{k,\ell}:=\E[Z_k(s)Z_\ell(t)]$, 
the bounds in Lemma \ref{l:bounds} and the expression for the matrix (\ref{matrix}). 
In this manner the integrand can be bounded by $\Const\, (1+z^2)^q\exp{(-q\alpha z^2)}.$
In conclusion we have
\[
V^r_q:=\lim_{d\to\infty }\displaystyle d^{\frac m2}
\E\left[\left(\int_{ S^{m}}g_q(\mathbf Z_d(t))dt\right)^2\right]=2\kappa_m\kappa_{m-1}\int_0^{\infty}
z^{m-1}\tilde{\mathcal H}_{q}(z) dz\quad q\ge 1,
\]
where 
\[
\tilde{\mathcal H}_{q}(z):=
\lim_{d\to\infty}\tilde{\mathcal H}_{qd}\left(\cos\left(\frac{z}{\sqrt{d}}\right)\right).
\]

Implicit in $\tilde{\mathcal H}_{q}(z)$ 
we use the pointwise limits given in Lemma \ref{l:limits}.

\begin{remark}
In the present case the domination is obtained 
via Mehler's formula 
separately for each fixed $q$. 
In \cite{aadl2} we used a less ad hoc argument 
profiting of the computation of the limit global variance 
(of the number of roots of the system) using Rice formula.
\end{remark}

\subsection{Point 2 of Theorem  \ref{teo:4o}}
Recall from \eqref{eq:v-r} that
$$
V^r_\infty=\lim_{d\to\infty}\Var(\bar{\VV}_d)
=\lim_{d\to\infty}\sum^{\infty}_{q=0}d^{\frac m2}\E\left[\left(\int_{S^{m}}g_q(\mathbf Z_d(t))dt\right)^2\right].
$$ 
The second equality follows from Parseval's identity. 
Thus, by Fatou's Lemma 
$$
V^r:=\sum^{\infty}_{q=0}V^r_q
=\sum^{\infty}_{q=0}\lim_{d\to\infty}d^{\frac m2}\E\left[\left(\int_{S^{m}}g_q(\mathbf Z_d(t))dt\right)^2\right]
\leq V^r_\infty
<\infty.
$$
Actually, equality holds as a consequence of Point 4 and the finiteness of $V^r_\infty$.

\subsection{Normality of the $q$-th term}
Lemma \ref{lemma:cont-cov} below 
gives a sufficient condition on the covariances of the process $\bhZ$
in order to verify the convergence of the norm  of the contractions 
(which in turn gives the asymptotic normality).
Below we write the chaotic components 
\[
I_{q,d}=d^{\frac m4}\int_{S^{m}}g_q(\mathbf Z_d(t))dt.
\]
in Proposition \ref{prop:expansion} 
as multiple stochastic integrals w.r.t. a standard Brownian motion $\bB$ 
and use this fact in order to 
prove Lemma \ref{lemma:cont-cov}.

Let $\bB=\{B(\lambda):\lambda\in[0,\infty)\}$ be a standard Brownian motion on $[0,\infty)$. 
By the isometric property of stochastic integrals 
there exist kernels $h_{t,\ell}$ such that 
the components of the vector $\bhZ$ defined in \eqref{eq:zeta} can be written as:
\begin{equation}\label{eq:h}
Z_\ell(t)=\int^\infty_0h_{t,\ell}(\lambda)dB(\lambda),\ell=1,\dots,r(m+1).
\end{equation}
The kernels $h_{t,\ell}$ can be computed explicitly 
from the definition of $Z_\ell$ 
writing the random coefficients as integrals w.r.t. the Brownian motion
$\bB$. 

The two following lemmas are close to the equivalent lemmas in \cite{aadl},
their proofs are omitted. 
Note that the number of equations $r$ of the system $\bY_d$ 
does not play any role in the condition \eqref{eq:cont-cov} below.

\begin{lemma}\label{lemma:kernels}
With the same notation and assumptions as in Proposition
  \ref{prop:expansion}, $I_{q,d}$ can be written as a multiple stochastic integral
\begin{equation*}
I_{q,d}=I^{\bB}_q(g_{q,d})=\int_{[0,\infty)^q}g_{q,d}({\blambda})dB({\blambda});
\end{equation*}
with 
\begin{equation*}
g_{q,d}(\blambda)=
d^{m/4}\sum_{|\bmu|=q}a_{\bmu}
\int_{S^{m}}
(\otimes^{r(m+1)}_{\ell=1}h^{\otimes \gamma_\ell}_{t,\ell})(\blambda)
dt,
\end{equation*}
where $h_{t,\ell}$ is defined in \eqref{eq:h} 
and $I^{\bB}_q$ 
is the $q$-folded multiple stochastic integral w.r.t. the Brownian
  motion $\bB$.  
\hfill $\blacksquare$
\end{lemma}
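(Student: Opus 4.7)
The plan is to reduce $I_{q,d}$ to a single $q$-th order multiple Wiener--It\^o integral in three steps: (i) represent each scalar component of $\bhZ(t)$ as a first-order Wiener integral with an orthonormal kernel at each fixed $t$, (ii) use the classical Hermite--multiple-integral identity to convert a product of Hermite polynomials in orthonormal Gaussian integrals into a single multiple integral of the tensor product of the kernels, and (iii) interchange $\int_{S^m}dt$ with $I_q^{\bB}$ by a stochastic Fubini argument.

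For step (i), I would fix any measurable bijection between the iid normalized Gaussian coefficients underlying $\bY_d$ and indicator kernels on disjoint unit intervals of $[0,\infty)$, writing each coefficient as $\int_0^\infty\mathbf 1_{[k-1,k]}(\lambda)\,dB(\lambda)$. Substituting this representation into \eqref{notacionvectorial}--\eqref{eq:zeta} yields explicit kernels $h_{t,\ell}\in L^2([0,\infty))$ with $Z_\ell(t)=I_1^{\bB}(h_{t,\ell})$. The key observation is that, at every fixed $t\in S^m$, the $r(m+1)$ entries of $\bhZ(t)$ are mutually independent standard normals: independence across distinct $\ell$ is built into the model; inspecting the block $A_{12}$ in \eqref{matrix} at $s=t$ gives $Y_\ell(t)\perp\hY'_{\ell j}(t)$ for every $j$; and since $\mathcal B(0)=\mathcal D(0)=1$, the block $A_{23}$ reduces to the identity at $s=t$, so the tangential derivative coordinates $\hY'_{\ell j}(t)$ are pairwise orthogonal in $j$. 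Therefore $\{h_{t,\ell}\}_{\ell=1}^{r(m+1)}$ is an orthonormal family in $L^2([0,\infty))$ for every $t$.

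For step (ii), I would invoke the classical identity
$$\prod_{\ell=1}^{r(m+1)}H_{\gamma_\ell}\!\left(I_1^{\bB}(h_{t,\ell})\right)=I_q^{\bB}\!\left(\bigotimes_{\ell=1}^{r(m+1)}h_{t,\ell}^{\otimes\gamma_\ell}\right),\qquad q=\sum_\ell\gamma_\ell,$$
which holds whenever the $h_{t,\ell}$ are pairwise orthonormal: the multiplication formula for multiple Wiener--It\^o integrals produces no contractions here because $\langle h_{t,\ell},h_{t,\ell'}\rangle_{L^2}=\delta_{\ell\ell'}$, so only the top-order term survives and the full product lies in the $q$-th chaos. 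Recalling that $\overline{\mathbf H}_{\bmu}(\bhZ(t))=\prod_\ell H_{\gamma_\ell}(Z_\ell(t))$, multiplying by $a_{\bmu}$ and summing over $|\bmu|=q$ gives, pointwise in $t$,
$$g_q(\bhZ(t))=\sum_{|\bmu|=q}a_{\bmu}\,I_q^{\bB}\!\left(\bigotimes_{\ell=1}^{r(m+1)}h_{t,\ell}^{\otimes\gamma_\ell}\right).$$

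For step (iii), I would multiply by $d^{m/4}$ and integrate over $S^m$, applying a stochastic Fubini theorem to move $\int_{S^m}dt$ inside $I_q^{\bB}$. The hypotheses reduce, after the orthonormality above, to
$$\int_{S^m}\bigg\|\bigotimes_\ell h_{t,\ell}^{\otimes\gamma_\ell}\bigg\|_{L^2([0,\infty)^q)}dt=\int_{S^m}\prod_\ell\|h_{t,\ell}\|_{L^2}^{\gamma_\ell}\,dt=\kappa_m<\infty,$$
which is trivially satisfied thanks to the compactness of $S^m$. This produces exactly the claimed expression $I_{q,d}=I_q^{\bB}(g_{q,d})$. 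The main obstacle is really just step (i): one must verify carefully that the $r(m+1)$ Gaussians forming $\bhZ(t)$ are pairwise independent at a single point, which requires inspecting \eqref{matrix} and \eqref{eq:deriv} at $s=t$; once this is in place, steps (ii) and (iii) are routine Malliavin calculus. A minor technical point is that the tensor product in the statement is unsymmetrized, but this causes no problem since $I_q^{\bB}$ only depends on the symmetrization of its integrand.
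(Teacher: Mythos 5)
Your proposal is correct and follows essentially the same route as the paper's (deferred) proof of the square case: the paper itself sets up the kernels $h_{t,\ell}$ in \eqref{eq:h} by writing the random coefficients as Wiener integrals, and the passage from $\prod_\ell H_{\gamma_\ell}(I_1^{\bB}(h_{t,\ell}))$ to $I_q^{\bB}(\otimes_\ell h_{t,\ell}^{\otimes\gamma_\ell})$ via orthonormality of $\{h_{t,\ell}\}_\ell$ at fixed $t$, followed by a stochastic Fubini over $S^m$, is exactly the intended argument. Your identification of the pointwise independence of the components of $\bhZ(t)$ (from $\mathcal A(0)=0$ and $A_{23}=I_m$ at $\theta=0$) as the key verification is accurate.
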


As $\Gamma_d(s,t)=\Gamma_d(\<s,t\>)$, 
then $\Gamma_d$ can be seen as a function of one real variable. 
\begin{lemma}\label{lemma:cont-cov}
For $k=0,1,2$, 
  let $\Gamma^{(k)}_d$ indicate the $k$-th derivative of $\Gamma_d:[-1,1]\to\R$. 
If
\begin{equation}\label{eq:cont-cov}
d^{m/3}\int^{\pi/2}_{0}
\sin^{m-1}(\theta)
|\Gamma^{(k)}_d(\cos(\theta))|
  d\theta \mathop{\to}_{d\to +\infty}0,
\end{equation}
then, for $n=1,\dots,q-1$ and $g_{q,d}$ defined in Lemma \ref{lemma:kernels}:
$$
\|g_{q,d}\otimes_n g_{q,d}\|_2\to_{d\to\infty}0.
$$
\hfill $\blacksquare$
\end{lemma}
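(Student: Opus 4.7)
The plan is to write $\|g_{q,d}\otimes_n g_{q,d}\|_2^2$ as a $4$-fold integral over $(S^m)^4$ whose integrand is a product of $2q$ covariances of $\bhZ$, and to bound this integral by a multiple of $J_d^3$, where
$$J_d:=\int_{S^m}|\rho(t_0,s)|\,ds=\kappa_{m-1}\int_0^{\pi}\sin^{m-1}(\theta)|\rho(\cos\theta)|\,d\theta$$
is a generic one-point integral of a covariance. The hypothesis \eqref{eq:cont-cov} will give $d^{m/3}J_d\to 0$, whence $d^{m}J_d^{3}=(d^{m/3}J_d)^{3}\to 0$, which is exactly what is needed to absorb the $d^{m}$ normalization that comes from the $d^{m/4}$ in each $g_{q,d}$.

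\textbf{Step 1: Diagrammatic expansion.} Using Lemma \ref{lemma:kernels} together with the isometric property of multiple stochastic integrals and the identity
$$\int_0^{\infty}h_{t,\ell}(\lambda)h_{s,\ell'}(\lambda)\,d\lambda=\rho_{\ell,\ell'}(s,t),$$
one expands
$$\|g_{q,d}\otimes_n g_{q,d}\|_2^{2}=d^{m}\sum_{|\bmu_1|=\dots=|\bmu_4|=q}a_{\bmu_1}\cdots a_{\bmu_4}\int_{(S^m)^4}\mathcal{R}_{\bmu_1,\dots,\bmu_4}(t_1,t_2,t_3,t_4)\,dt_1dt_2dt_3dt_4,$$
where $\mathcal{R}$ is a product of $2q$ factors $\rho_{\ell,\ell'}(t_i,t_j)$. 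The contraction combinatorics assign the edges on the vertex set $\{t_1,t_2,t_3,t_4\}$ as follows: $n$ edges between $(t_1,t_2)$ and $n$ between $(t_3,t_4)$ (coming from the two inner contractions $\otimes_n$), together with $q-n$ edges between $(t_1,t_3)$ and $q-n$ between $(t_2,t_4)$ (coming from pairing the free arguments in the outer norm squared). For $1\le n\le q-1$ all four edge classes are non-empty, so the associated graph is the $4$-cycle $t_1-t_2-t_4-t_3-t_1$.

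\textbf{Step 2: Reduction to $J_d^3$.} Every covariance $\rho_{\ell,\ell'}$ is bounded by a universal constant $C$ (Lemma \ref{l:bounds}), so any factor $|\rho_{\ell,\ell'}|^p$ with $p\ge 1$ is bounded by $C^{p-1}|\rho_{\ell,\ell'}|$. After taking absolute values we bound the $4$-fold integral by
$$\int_{(S^m)^4}|\rho_{12}|^n|\rho_{34}|^n|\rho_{13}|^{q-n}|\rho_{24}|^{q-n}\,dt_1dt_2dt_3dt_4.$$
The elementary inequality $|\rho_{12}|^n|\rho_{34}|^n\le\tfrac{1}{2}(|\rho_{12}|^{2n}+|\rho_{34}|^{2n})$ together with the symmetry $(t_1,t_2)\leftrightarrow(t_3,t_4)$ (which preserves the remaining two factors $|\rho_{13}|^{q-n}|\rho_{24}|^{q-n}$) leaves
$$\int_{(S^m)^4}|\rho_{12}|^{2n}|\rho_{13}|^{q-n}|\rho_{24}|^{q-n}\,dt_1dt_2dt_3dt_4.$$
By Fubini and the rotational invariance of the law of $\bhZ$ the inner integrals $\int_{S^m}|\rho(t_1,t_3)|^{q-n}dt_3$ and $\int_{S^m}|\rho(t_2,t_4)|^{q-n}dt_4$ are constants independent of $t_1,t_2$, so the expression factorizes as a product of three one-dimensional integrals. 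Reducing all remaining powers to $1$ via $|\rho|\le C$ yields
$$\|g_{q,d}\otimes_n g_{q,d}\|_2^{2}\le \Const\cdot d^{m}\,J_d^{3}.$$

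\textbf{Step 3: Applying the hypothesis.} From \eqref{matrix}--\eqref{eq:deriv} each covariance entry $\rho_{\ell,\ell'}(s,t)$ is a combination of $\mathcal{A},\mathcal{B},\mathcal{C},\mathcal{D}$, each of which is in turn a combination of $r_d^{(k)}(\cos\theta)$ for $k=0,1,2$, multiplied by harmless factors $\sin^{j}\theta$ ($j\le 2$) that can be absorbed into the Jacobian $\sin^{m-1}\theta$ of \eqref{eq:intS}. Using the change of variable symmetry $\theta\mapsto \pi-\theta$ (see Remark \ref{invariance}) to reduce the range of integration to $[0,\pi/2]$, we get
$$J_d\le \Const\sum_{k=0}^{2}\int_{0}^{\pi/2}\sin^{m-1}(\theta)\,|r_d^{(k)}(\cos\theta)|\,d\theta.$$
Hypothesis \eqref{eq:cont-cov} then yields $d^{m/3}J_d\to 0$ and consequently $\|g_{q,d}\otimes_n g_{q,d}\|_2^{2}\le\Const(d^{m/3}J_d)^{3}\to 0$, as desired.

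\textbf{Main obstacle.} The bulk of the work is the diagrammatic bookkeeping of Step 1: one must keep track of which covariance factor is attached to which pair $(t_i,t_j)$ and verify that the $4$-cycle structure — guaranteed precisely by the condition $1\le n\le q-1$ — produces three effectively independent one-point integrals in Step 2. The matching between the exponent $3$ of $J_d$ and the exponent $m/3$ in \eqref{eq:cont-cov} against the overall normalization $d^{m}$ is what makes the argument close neatly; this alignment is in fact the reason the hypothesis of the lemma is stated with the specific exponent $m/3$.
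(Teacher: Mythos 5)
Your proof is correct and follows essentially the same route as the paper, which for this lemma simply defers to the square case in the companion paper: there too one expands $\|g_{q,d}\otimes_n g_{q,d}\|_2^2$ as a four-point integral with the cyclic edge structure forced by $1\le n\le q-1$, reduces it to three one-point covariance integrals, and matches $d^{m}J_d^{3}=(d^{m/3}J_d)^3$ against hypothesis \eqref{eq:cont-cov}. The only points worth making explicit are that the reduction $|\rho|^{p}\le C^{p-1}|\rho|$ uses $p\ge 1$ (i.e.\ exactly the hypothesis $1\le n\le q-1$) and that the bound on each covariance entry by $\sum_{k}|r_d^{(k)}(\cos\theta)|$ is frame-independent, both of which you handle correctly.
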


Therefore, 
it suffices to verify \eqref{eq:cont-cov}. 
For $k=0,1,2$ we have
\begin{align*}
d^{m/3}\int^{\pi/2}_{0}
\sin^{m-1}(\theta)
|\Gamma^{(k)}_d(\cos(\theta))|
d\theta 
  & = d^{m/3}
\int^{\sqrt{d}\pi/2}_{0}
\sin^{m-1}\left(\frac{z}{\sqrt{d}}\right)
\left|\Gamma^{(k)}_d\left(\cos\left(\frac{z}{\sqrt{d}}\right)\right)\right|
\frac{dz}{\sqrt{d}}\\
  & = \frac{1}{d^{m/6}}
\int^{\sqrt{d}\pi/2}_{0}
d^{\frac{m-1}{2}}\sin^{m-1}\left(\frac{z}{\sqrt{d}}\right)
\left|\Gamma^{(k)}_d\left(\cos\left(\frac{z}{\sqrt{d}}\right)\right)\right|dz.
%\to_{d\to\infty}0,
\end{align*}
Now $d^{\frac{m-1}{2}}\sin^{m-1}\left(z/\sqrt{d}\right)\leq z^{m-1}$ 
and taking the worst case in Lemma \ref{l:bounds} 
we have $|\Gamma^{(k)}_d(z/\sqrt{d})|\leq (1+z^2)\exp(-\alpha z^2)$. 
Hence, the last integral is convergent and (\ref{eq:cont-cov}) follows.

\subsection{Point 4 in Theorem \ref{teo:4o}}
Let 
% $\pi_q$ be the projection on the $q$-th chaos $\CC_q$ and 
$\pi^Q
% =\sum_{q\geq Q}\pi_q
$ be the projection 
on $\oplus_{q\geq Q}\CC_q$. 
We need to bound the following quantity uniformly in $d$
\begin{eqnarray}\label{tailvariance}
\frac{d^{m/2}}4\Var(\pi^Q(\VV_{\bY_d}(\mathbf 0))
=\frac14\sum_{q\geq Q} d^{m/2}\int_{S^m\times S^m}{\tilde{\mathcal H}}_{qd}(\<s,t\>)dsdt,
\end{eqnarray}
where ${\tilde{\mathcal H}}_{qd}$ is defined in (\ref{H}).

In order to bound this quantity we split the integral 
depending on the 
(geodesical) distance between $s,t\in S^m$ 
\begin{equation*}
\dist(s,t)=\arccos(\left\langle s,t\right\rangle),
\end{equation*}
into the integrals over the regions
$\{(s,t):\dist(s,t)<a/\sqrt{d}\}$ and its complement, 
$a$ will be chosen later. 
We bound each part in the following two subsections.

\subsubsection{Off-diagonal term}
In this subsection we consider the integral in the r.h.s. of \eqref{tailvariance} 
restricted to the off-diagonal region $\{(s,t):\dist(s,t)\geq a/\sqrt{d}\}$. 
% That is,
% $$
% \frac{d^{m/2}}{4}\sum_{q\geq Q}\int_{\{(s,t):\,\dist(s,t)\geq
% a/\sqrt{d}\}}{\tilde{\mathcal H}}_{qd}(\<s,t\>)dsdt.
% $$
This is the easier case since the covariances of $\bhZ$ are bounded away from $1$. 

We use Arcones' Lemma (\cite{arcones}, page 2245). 
Let $X$ be a standard Gaussian vector on $\R^N$ and $h:\R^N\to\R$ a measurable function 
such that $\mathbb E[h^2(X)]<\infty$ and let us consider its $L^2$ convergent Hermite's expansion 
$$h(x)=\sum_{q=0}^\infty\sum_{|\mathbf k|=q}h_{\mathbf k}\mathbf H_{\mathbf k}(x).$$ 
The Hermite rank of $h$ is defined as 
$$\mbox{rank}(h)=\inf\{\tau:\, \exists\; \mathbf k\,, |\mathbf k|=\tau\,; \E[(h(X)-\E h(X))\mathbf H_{\mathbf k}(X)]\neq0\}.$$
Then, we have 
\begin{lemma}[\cite{arcones}]
Let $W=(W_1,\ldots,W_N)$ and $Q=(Q_1,\ldots,Q_N)$ be two mean-zero Gaussian random vectors on $\R^N$. Assume that
\begin{equation*}
\mathbb E[W_jW_k]=\mathbb E[Q_jQ_k]=\delta_{j,k}, 
\end{equation*}
for each $1\le j,k\le N$. 
We define 
\begin{equation*}
r^{(j,k)}=\mathbb E[W_jQ_k].
\end{equation*}
Let $h$ be a function on $\R^N$ with finite second moment and Hermite rank $\tau,$ $1\le\tau<\infty,$ define
\begin{equation*}
\psi:= \max\left\{\max_{1\le j\le N}\sum_{k=1}^N|r^{(j,k)}|,\max_{1\le k\le N}\sum_{j=1}^N|r^{(j,k)}|\right\}.
\end{equation*}
Then
\begin{equation*}
|\Cov(h(W),h(Q))|\le \psi^{\tau}\mathbb E[h^2(W)].
\end{equation*}
\hfill $\blacksquare$
\end{lemma}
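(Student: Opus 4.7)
The plan is to prove Arcones's covariance bound by expanding $h$ in the multivariate Hermite basis and bounding each cross-moment $\E[\mathbf{H}_{\mathbf{k}}(W)\mathbf{H}_{\mathbf{k}'}(Q)]$ in terms of $\psi$, leveraging the fact that $\psi$ controls the $\ell^{2}\to\ell^{2}$ operator norm of the cross-covariance matrix $R=(r^{(j,k)})$ via the Schur test.

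First I would write the Hermite expansion
$$
h(x)-\E[h(W)]=\sum_{|\mathbf{k}|\geq\tau}c_{\mathbf{k}}\mathbf{H}_{\mathbf{k}}(x),
$$
where the sum starts at $|\mathbf{k}|=\tau$ precisely by the definition of the Hermite rank. Parseval gives $\sum_{|\mathbf{k}|\geq\tau}c_{\mathbf{k}}^{2}\mathbf{k}!=\Var(h(W))\leq\E[h^{2}(W)]$. Then bilinearity of the covariance yields
$$
\Cov(h(W),h(Q))=\sum_{\mathbf{k},\mathbf{k}'}c_{\mathbf{k}}c_{\mathbf{k}'}\E[\mathbf{H}_{\mathbf{k}}(W)\mathbf{H}_{\mathbf{k}'}(Q)].
$$
By the diagram (Wick) formula applied to the joint Gaussian vector $(W,Q)$, the cross-moment vanishes unless $|\mathbf{k}|=|\mathbf{k}'|=q$, in which case it equals a sum over bipartite matchings between the $k_{j}$ ``half-edges'' attached to each $W_{j}$ and the $k'_{k}$ half-edges attached to each $Q_{k}$, each matching being weighted by the product of the corresponding entries of $R$.

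The technical heart of the argument is the bound
$$
|\E[\mathbf{H}_{\mathbf{k}}(W)\mathbf{H}_{\mathbf{k}'}(Q)]|\leq\psi^{q}\sqrt{\mathbf{k}!\,\mathbf{k}'!}\quad\text{when }|\mathbf{k}|=|\mathbf{k}'|=q,
$$
which I would derive from the Schur test: the hypotheses $\max_{j}\sum_{k}|r^{(j,k)}|\leq\psi$ and $\max_{k}\sum_{j}|r^{(j,k)}|\leq\psi$ force $R$ to act on $\ell^{2}$ with operator norm at most $\psi$. Viewing the diagrammatic sum as $q$ successive applications of the bilinear form $R$ to the ``legs'' of the two Hermite tensors, and factoring out the $L^{2}$ norms $\sqrt{\mathbf{k}!}$ and $\sqrt{\mathbf{k}'!}$, yields the claim after bookkeeping of the combinatorial multiplicities.

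Putting the pieces together, I would apply Cauchy--Schwarz at each fixed chaos level $q$ to get
$$
\left|\sum_{|\mathbf{k}|=|\mathbf{k}'|=q}c_{\mathbf{k}}c_{\mathbf{k}'}\E[\mathbf{H}_{\mathbf{k}}(W)\mathbf{H}_{\mathbf{k}'}(Q)]\right|\leq\psi^{q}\sum_{|\mathbf{k}|=q}c_{\mathbf{k}}^{2}\mathbf{k}!,
$$
and then sum over $q\geq\tau$ using $\psi^{q}\leq\psi^{\tau}$ (valid whenever $\psi\leq1$, which is the only interesting case) to reach $|\Cov(h(W),h(Q))|\leq\psi^{\tau}\E[h^{2}(W)]$. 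The main obstacle is this single step bounding cross-moments of Hermite polynomials: converting row/column $\ell^{1}$ estimates on $R$ into an operator bound on the $q$-fold symmetric tensor space, while keeping track of the symmetry factors $\mathbf{k}!$ and $\mathbf{k}'!$, requires a careful Cauchy--Schwarz/Young-type argument; once this is in hand, the rest of the proof is standard Hermite calculus.
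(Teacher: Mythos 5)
The paper does not prove this lemma at all: it is quoted verbatim from Arcones \cite{arcones} (his Lemma 1) and stamped with \qed, so there is no in-paper argument to compare against. Judged on its own, your plan follows the standard route -- Hermite expansion starting at order $\tau$, reduction to a chaos-by-chaos estimate, and the observation that the case $\psi\ge 1$ is trivial by plain Cauchy--Schwarz since $W$ and $Q$ are equidistributed -- and the Schur-test idea is sound: the row/column $\ell^1$ bounds do give $\|R\|_{\ell^2\to\ell^2}\le\psi$, hence $\|R^{\otimes q}\|\le\psi^q$ on $(\R^N)^{\otimes q}$. This is a cleaner, more structural mechanism than Arcones' original argument, which works directly on the diagram-formula sum over matrices $(d_{jk})$ with prescribed row and column sums and distributes the factors $|r^{(j,k)}|$ using elementary inequalities of the type $|xy|\le\frac12(x^2+y^2)$.

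There is, however, one genuine flaw in the way you assemble the pieces. You isolate as ``the technical heart'' the entrywise bound $|\E[\mathbf H_{\mathbf k}(W)\mathbf H_{\mathbf k'}(Q)]|\le\psi^q\sqrt{\mathbf k!\,\mathbf k'!}$ and then claim that ``Cauchy--Schwarz at each fixed chaos level'' yields $\bigl|\sum_{|\mathbf k|=|\mathbf k'|=q}c_{\mathbf k}c_{\mathbf k'}\E[\mathbf H_{\mathbf k}(W)\mathbf H_{\mathbf k'}(Q)]\bigr|\le\psi^q\sum_{|\mathbf k|=q}c_{\mathbf k}^2\,\mathbf k!$. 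The entrywise bound is true, but it is the wrong intermediate target: feeding it into the triangle inequality gives $\psi^q\bigl(\sum_{|\mathbf k|=q}|c_{\mathbf k}|\sqrt{\mathbf k!}\bigr)^2$, which exceeds $\psi^q\sum_{|\mathbf k|=q}c_{\mathbf k}^2\,\mathbf k!$ by a dimensional factor (the number of multi-indices of weight $q$), and the bound is lost. The correct step is to bypass entrywise estimates entirely: identify the $q$-th chaos elements $F=\sum_{|\mathbf k|=q}c_{\mathbf k}\mathbf H_{\mathbf k}(W)$ and $G=\sum_{|\mathbf k'|=q}c_{\mathbf k'}\mathbf H_{\mathbf k'}(Q)$ with symmetric tensors $f,g$, note that $\E[F\,G]=q!\left\langle f,R^{\otimes q}g\right\rangle$, and apply $\|R^{\otimes q}\|\le\psi^q$ to the whole bilinear form at once, giving $|\E[F\,G]|\le\psi^q\|F\|_2\|G\|_2=\psi^q\sum_{|\mathbf k|=q}c_{\mathbf k}^2\,\mathbf k!$ since $F$ and $G$ share the same coefficients. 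With that reorganization your argument closes; as written, the logical chain from your stated key lemma to the chaos-level estimate does not.
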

We apply this lemma for $N=r\times(1+m)$, $W=\mathbf Z(s)$, $Q=\mathbf Z(t)$ 
and to the function $h(\mathbf y,\mathbf y')=g_q(\mathbf y,\mathbf y')$. Recalling that 
$\rho_{k,\ell}(s,t)=\rho_{k,\ell}(\<s,t\>)=\E[Z_k(s)Z_\ell(t)]$, 
the Arcones' coefficient is now 
$$\psi(s,t)=\max\left\{\sum_{1\le k\le m+m^2}|\rho_{k,\ell}(s,t)|,\sum_{1\le \ell\le m+m^2}|\rho_{k,\ell}(s,t)|\right\}.$$
Thus
\[
|{\tilde{\mathcal H}}_{qd}(\<s,t\>)|\le \psi(\<s,t\>)^q||g_q||^2,
\]
being 
$\|g_q\|^2=\E[g^2_q(\zeta)]$ for standard normal $\zeta$. 

The following lemma is obtained as in \cite{aadl2}.

\begin{lemma}\label{l:normaG}
For $g_q$  
it holds that 
$ ||g_q||^2\le ||f||^2_2 $.
\end{lemma}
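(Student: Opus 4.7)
The plan is to use orthogonality of the Hermite basis under the standard Gaussian measure, group terms by the $\bbeta$-index, and reduce the statement to a uniform bound on the $\balpha$-marginal of the squared coefficients $b_\balpha^2\,\balpha!$.

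First, since $\{\overline{\mathbf H}_\bmu\}$ is an orthogonal system for the Gaussian measure on $\R^{r(1+m)}$ with $\E[\overline{\mathbf H}_\bmu(\zeta)^2]=\bmu!$ and since $\bmu=(\balpha,\bbeta)$, $\bmu!=\balpha!\,\bbeta!$, $a_\bmu=b_\balpha f^r_\bbeta$, Parseval gives
\begin{equation*}
\|g_q\|^2=\sum_{|\bmu|=q}a_\bmu^2\,\bmu!
=\sum_{|\balpha|+|\bbeta|=q}b_\balpha^2\,(f^r_\bbeta)^2\,\balpha!\,\bbeta!.
\end{equation*}
Grouping by the value of $\bbeta$ and setting $C_k:=\sum_{|\balpha|=k}b_\balpha^2\,\balpha!$, this becomes a convolution-type sum
$\|g_q\|^2=\sum_{|\bbeta|\le q}(f^r_\bbeta)^2\,\bbeta!\,C_{q-|\bbeta|}$.
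By construction $\|f^r\|_2^2=\sum_\bbeta(f^r_\bbeta)^2\,\bbeta!$ (Parseval for $f^r\in L^2(\phi_{r\times m})$, using that $(f^r)^2$ is polynomial), so it suffices to prove the uniform bound $C_k\le 1$ for every $k\ge 0$; the conclusion of the lemma then follows termwise.

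Second, I would establish the bound $C_k\le 1$ by the explicit form of the $b_\balpha$. Since $b_\balpha=0$ unless every $\alpha_j=2k_j$ is even, and in that case $b_\balpha=(2\pi)^{-r/2}\prod_{j=1}^{r}(-1/2)^{k_j}/k_j!$, a direct computation yields
\begin{equation*}
b_\balpha^{2}\,\balpha!=(2\pi)^{-r}\prod_{j=1}^{r}\binom{2k_j}{k_j}4^{-k_j},
\end{equation*}
which is bounded by $(2\pi)^{-r}$ term by term through the classical central-binomial inequality $\binom{2k}{k}4^{-k}\le 1$. Summing over the multi-indices with $|\balpha|=2K$ and using the generating-function identity $\sum_{K\ge 0}x^K\sum_{k_1+\dots+k_r=K}\prod_j\binom{2k_j}{k_j}4^{-k_j}=(1-x)^{-r/2}$ controls the combinatorial factor and closes the bound.

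Third, combining the two steps, $\|g_q\|^2\le\sum_{|\bbeta|\le q}(f^r_\bbeta)^2\,\bbeta!\le\|f^r\|_2^2$, which is the desired inequality.

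The main obstacle is the second step: the termwise bound $b_\balpha^2\,\balpha!\le(2\pi)^{-r}$ is cheap, but the number of $\balpha$ with $|\balpha|=2K$ grows polynomially in $K$, so one needs the precise generating-function identification above (rather than a crude counting argument) in order to absorb that combinatorial growth against the $(2\pi)^{-r}$ prefactor and obtain $C_k\le 1$ uniformly in $k$. This is exactly the point where the specific form of the Hermite coefficients of $\delta_{\mathbf 0}$ is used.
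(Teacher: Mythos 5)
Your first step is correct and is surely the intended route: orthogonality of the tensorized Hermite polynomials gives $\|g_q\|^2=\sum_{|\balpha|+|\bbeta|=q}b_{\balpha}^2\,\balpha!\,(f^r_{\bbeta})^2\,\bbeta!$, and the lemma reduces to the uniform bound $C_k:=\sum_{|\balpha|=k}b_{\balpha}^2\,\balpha!\le 1$. (The paper gives no proof of this lemma --- it is asserted as ``an easy consequence of the definition'' --- so there is nothing more specific to compare against.) The gap is in your second step: the claim $C_k\le 1$ is \emph{false} for $r\ge 3$, and your own generating-function identity shows it. Writing $\alpha_j=2k_j$, you correctly get $b_{\balpha}^2\,\balpha!=(2\pi)^{-r}\prod_{j=1}^r\binom{2k_j}{k_j}4^{-k_j}$, and then
\begin{equation*}
C_{2K}=(2\pi)^{-r}\,[x^K](1-x)^{-r/2}=(2\pi)^{-r}\binom{K+\tfrac r2-1}{K}\sim\frac{K^{\frac r2-1}}{(2\pi)^r\,\Gamma(r/2)},
\end{equation*}
which is bounded in $K$ only for $r\le 2$ (for $r=4$, for instance, $C_{2K}=(K+1)/(2\pi)^4$, which exceeds $1$ once $K$ is large). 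The termwise inequality $\binom{2k}{k}4^{-k}\le 1$ cannot absorb the roughly $K^{r-1}$ multi-indices on the simplex $k_1+\cdots+k_r=K$; even using the true decay $\binom{2k}{k}4^{-k}\sim(\pi k)^{-1/2}$, the simplex sum still grows like $K^{\frac r2-1}$. So the generating function does not ``close the bound''; it quantifies exactly how it fails.

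The consequence is worse than a fixable detail in your write-up: since $f^r_{\mathbf 0}=\E[f^r]>0$, the single term $\bbeta=\mathbf 0$ in your convolution formula already gives $\|g_q\|^2\ge (f^r_{\mathbf 0})^2\,C_q$, so for $r\ge 3$ and large even $q$ the decomposition shows that $\|g_q\|^2\le\|f^r\|_2^2$ cannot be recovered along these lines at all. What your argument does deliver is the weaker estimate $\|g_q\|^2\le \Const\,(1+q)^{\frac r2-1}\,\|f^r\|_2^2$. That polynomial-in-$q$ bound is still enough for the only place the lemma is used (the off-diagonal estimate, where $\|g_q\|^2$ is paired with the geometric factor $r_0^{q-1}$, $r_0<1$), but it is not the inequality stated in the lemma. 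You should either prove the stated inequality by a genuinely different mechanism (none is apparent, given the lower bound above), restrict it to $r\le 2$, or replace it by the polynomial bound and verify that this suffices downstream.
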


To bound the Arcones' coefficient $\psi(\<s,t\>)$ 
we use the expressions in (\ref{eq:intS}) 
thanks to the invariance of the distribution of $\bY_d$ (and $\bhZ$) under isometries.
It is not hard to see that 
the maximum in the definition of $\psi$ is $|\mathcal C|+\mathcal |\mathcal A|$, see \eqref{eq:deriv}. 
From Lemma \ref{l:bounds} 
it follows that $|\mathcal C|+|\mathcal A|\le e^{-\alpha z^2}(1+z).$  
For $z=2$ the bound takes the value
$2e^{-4\alpha}$ which is less or equal to one if $\alpha\ge \frac14\log2$, this is always possible because the only restriction that we have is 
$\alpha<\frac12.$ 
Moreover, for $\delta$ small enough $e^{-\alpha z^2}(1+z)\ge1$ if $z<\delta.$ 
Thus, there exists an $a<2$ such that for all $z\ge a$ it holds $|\mathcal C|+|\mathcal A|<r_0< 1$. 
Hence,
\begin{multline*}
\sup_{d}\sum_{q\geq Q}\frac{d^{m/2}}4
\int_{\left\{(s,t):\,\dist(s,t)\geq \frac{a}{\sqrt{d}}\right\}}{\tilde{\mathcal H}}_{q,d}(\<s,t\>)dsdt\\
=\sup_{d}\frac{C_m}4\left|\sum_{q\geq Q} d^{\frac{m-1}2}
\int_{a}^{\sqrt d\pi}\sin^{m-1}\left(\frac z{\sqrt d}\right)
{\tilde{\cal H}}^{q}_d\left(\cos\left(\frac z{\sqrt d}\right)\right)dz\right|\\
\le C_m||f||_2^2\sum_{q\geq Q} r^{q-1}_0\int_{a}^\infty z^{m-1}(1+z)e^{-\alpha z^2}dz
\mathop{\to}_{Q\to\infty}0. 
\end{multline*}

\subsubsection{Diagonal term}
It remains to prove that the integral in the r.h.s. of \eqref{tailvariance} 
restricted to the diagonal region $\{(s,t):\,\dist(s,t)< a/\sqrt{d}\}$
tends to $0$ as $Q\to\infty$ uniformly in $d$, 
$a<2$ is fixed. 
This is the difficult part, we use an indirect argument.

Next proposition, whose proof is similar to that in \cite{aadl2}, gives a
convenient partition of the sphere based on the hyperspheric
coordinates.
For $\Theta=(\theta_1,\dots,\theta_{m-1},\theta_m)\in[0,\pi)^{m-1}\times[0,2\pi)$ 
we write $x^{(m)}(\Theta)=(x^{(m)}_1(\Theta),\dots,x^{(m)}_{m+1}(\Theta))\in S^m$ in the following way
\begin{equation*}
x^{(m)}_k(\Theta)  = \prod^{k-1}_{j=1}\sin(\theta_j)\cdot\cos(\theta_k),\;k\leq m\textrm{ and }
x^{(m)}_{m+1}(\Theta)  = \prod^{m}_{j=1} \sin(\theta_j);
\end{equation*}
with the convention that $\prod^0_1=1$. 

Define the hyperspherical rectangle (HSR for short) with 
center $x^{(m)}(\tilde{\theta})$ with $\tilde{\theta}=(\tilde{\theta}_1,\ldots,\tilde{\theta}_m)$ 
and vector radius $\tilde{\eta}=(\tilde{\eta}_1,\ldots,\tilde{\eta}_m)$ as
\begin{equation*}
HSR(\tilde{\theta},\tilde{\eta})=
\{x^{(m)}(\theta):|\theta_i-\tilde{\theta}_i|<\tilde{\eta}_i,i=1,\ldots,m\}.
\end{equation*}
Let $T_tS^m$ be the the tangent space to $S^m$ at $t$. This space  can be identified with $t^\bot\subset\R^{m+1}$.   
Let $\phi_t:S^m\to t^\bot$ be the orthogonal projection over $t^{\bot}$, 
$\mathcal{C}_{\mathbf Y_d}(\mathbf{0})$ be the zero set of $\bY_d$ on
$S^m$ and $\VV$ its volume on $S^m$. 
\begin{proposition}\label{p:partition}
For $d$ large enough, 
there exists a partition of the unit sphere $S^m$ into HSRs $R_j:j=1,\ldots,k(m,d)=O(d^{m/2})$ 
and an extra set $E$ such that
\begin{enumerate}
 \item $\Var(\VV(\mathcal{C}_{\mathbf Y_d}(\mathbf{0})\cap E))=o(d^{r-\frac m2})$.
 \item The HSRs $R_j$ have diameter $O(\frac{1}{\sqrt{d}})$ and if $R_j$ and $R_\ell$ do not share any border point 
 (they are not neighbors), then $\dist(R_j,R_\ell)\geq \frac{1}{\sqrt{d}}$. 
 \item The projection of each of the sets $R_j$ on the tangent space at its center $c_j$, 
after normalizing by the multiplicative factor $\sqrt{d}$, converges to
    the rectangle $ [-1/2, 1/2]^m$ in the sense of Hausdorff distance. 
That is, 
the  Hausdorff distance of 
$$
\left[-\frac12,\frac12\right]^m\setminus\sqrt{d}\;\phi_{c_j}(R_j)
$$
\end{enumerate}
tends to $0$ as $d\to\infty$.
\hfill $\blacksquare$
\end{proposition}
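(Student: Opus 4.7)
The construction is essentially a careful triangulation of the sphere using the hyperspherical chart (\ref{e:hs}). Fix a small constant $\eta_0 > 0$ (independent of $d$) and set the mesh size $\eta = 1/\sqrt{d}$. On the coordinate box $[0,\pi)^{m-1}\times[0,2\pi)$ I would consider the sub-box
$$
\Theta_d \;=\; [\eta_0,\pi-\eta_0]^{m-1}\times[0,2\pi),
$$
and partition it into a regular grid of rectangles of side $\eta$ in each angular coordinate (adjusting the last strip in each direction so that we exactly tile $\Theta_d$). This yields $O(\eta^{-m})=O(d^{m/2})$ rectangles $R_j$ with centers $c_j=x^{(m)}(\tilde\theta_j)$ and vector radius $\tilde\eta_j$ of size $\Theta(\eta)$. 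The remaining exceptional set is
$$
E \;=\; S^m\setminus\bigcup_j R_j,
$$
i.e.\ a tubular neighborhood of the singular locus of the chart (where some $\sin\theta_i$ vanishes). Because $\eta_0$ is fixed and small, $E$ has $m$-dimensional area bounded by a small constant; shrinking $\eta_0$ slowly with $d$ makes $\Vol(E)\to 0$ if needed.

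To verify (2) and (3) I would work in the tangent chart $\phi_{c_j}$ at each center. The differential of the map $\theta\mapsto x^{(m)}(\theta)$ at $\tilde\theta_j$ is, for $\tilde\theta_j\in\Theta_d$, a linear isometry up to a diagonal factor with entries bounded above and below by constants depending only on $\eta_0$. Hence the image $\phi_{c_j}(R_j)$ is, after the scaling $\sqrt{d}$, a Euclidean rectangle that differs from $[-1/2,1/2]^m$ by an $O(1/\sqrt{d})$ perturbation in Hausdorff distance. The same local estimate gives $\operatorname{diam}(R_j)=O(1/\sqrt{d})$, and since two non-neighboring cells of the $\theta$-grid differ by at least $\eta$ in some coordinate, the corresponding HSRs are at geodesic distance $\geq \eta\cdot(1-o(1))\geq 1/\sqrt{d}$ on $S^m$ for $d$ large.

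The main obstacle is (1): bounding $\Var(\VV(\mathcal C_{\mathbf Y_d}(\mathbf 0)\cap E))$. The plan is to apply the Kac--Rice formula for the second moment of the volume and split the double integral over $E\times E$ into the diagonal region $\{(s,t):\dist(s,t)<a/\sqrt{d}\}$ and its complement. On the off-diagonal region the correlations $\rho_{k\ell}(s,t)$ are bounded away from $1$, so by the same conditioning computation as in the proof of Lemma \ref{l:exvol} the integrand is $O(d^r)$ and the contribution is $d^r\cdot O(|E|^2)$. On the diagonal region one uses Lemma \ref{l:bounds} with $z=\sqrt{d}\,\dist(s,t)$: the relevant covariances decay like $(1+z^2)\exp(-\alpha z^2)$, which after the scaling $\theta = z/\sqrt d$ produces a factor $d^{-m/2}$ from the integration in the $t$-variable, yielding a contribution of order $d^{r-m/2}|E|$. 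Subtracting the square of the expectation (which has a matching leading term) and using the fact that $|E|\to 0$ gives the desired $o(d^{r-m/2})$ bound.

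One technical point worth isolating is the cancellation of the leading $d^{r-m/2}|E|$ term between $\mathbb E[\VV^2]$ and $(\mathbb E[\VV])^2$: this follows from Letendre's asymptotic (\ref{eq:v-r}) applied to the restriction of the volume to $E$, exactly as in the treatment of $\VV_{\mathbf Y_d}(\mathbf 0)$ itself, so that only the second-order correction survives and the bound is governed by $|E|\to 0$. With these three ingredients in hand the proposition follows; the detailed computations are carried out in \cite{aadl2}.
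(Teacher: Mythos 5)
The paper itself does not prove this proposition; it defers entirely to the companion paper \cite{aadl2}, so there is no in-paper argument to compare against line by line. Judged on its own merits, your construction has a genuine gap in items (2) and (3). A \emph{uniform} grid of side $\eta=1/\sqrt d$ in the angular coordinates does not work, because the hyperspherical chart \eqref{e:hs} is far from an isometry: the round metric in these coordinates is $d\theta_1^2+\sin^2\theta_1\,d\theta_2^2+\cdots+\bigl(\prod_{j=1}^{m-1}\sin^2\theta_j\bigr)d\theta_m^2$, so a coordinate cube of side $\eta$ centered at $\tilde\theta$ maps to a cell with geodesic side lengths $\eta,\ \eta\sin\tilde\theta_1,\ \ldots,\ \eta\prod_{j=1}^{m-1}\sin\tilde\theta_j$. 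Your statement that the differential is "a linear isometry up to a diagonal factor with entries bounded above and below by constants depending only on $\eta_0$" is true but insufficient: for item (3) you need those diagonal entries to tend to $1$, which they do not for a cell where some $\sin\tilde\theta_j$ is bounded away from $1$. Consequently $\sqrt d\,\phi_{c_j}(R_j)$ converges to a $j$-dependent anisotropic rectangle, not to $[-1/2,1/2]^m$, and the separation of non-neighbouring cells is only $\geq c(\eta_0)/\sqrt d$ with $c(\eta_0)=\sin^{m-1}\eta_0<1$, not $\geq 1/\sqrt d$. The problem is aggravated by your own fix for item (1): making $\Vol(E)\to 0$ forces $\eta_0\to 0$, which makes the distortion unbounded near $\partial\Theta_d$. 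The standard repair — and what \cite{aadl2} does — is an \emph{adaptive} grid built recursively: partition $\theta_1$ into intervals of length $\eta$, then for each such interval partition $\theta_2$ into intervals of length $\eta/\sin\tilde\theta_1$, and so on, so that every cell has geodesic side $\approx\eta$ in every direction; the count remains $\Vol(S^m)\eta^{-m}=O(d^{m/2})$ and the cells that cannot be so constructed (near the degeneracy locus of the chart) are swept into $E$.

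Your treatment of item (1) is in the right spirit (Kac--Rice for the second moment, diagonal/off-diagonal split, and the key cancellation producing $O(d^{r-m/2}\Vol(E))$), but the appeal to Letendre's asymptotic \eqref{eq:v-r} for the cancellation is not the right tool: that is a statement about the total volume on all of $S^m$, not about the restriction to a $d$-dependent set $E$. The cancellation should instead come directly from the fact that the Kac--Rice two-point integrand converges, exponentially fast in $\sqrt d\,\dist(s,t)$, to the product of the one-point densities, so that $\E[\VV_E^2]-(\E\VV_E)^2$ is controlled by $d^r\int_{E\times E}e^{-\alpha d\,\dist(s,t)^2}\,ds\,dt=O(d^{r-m/2}\Vol(E))$.
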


%%%
Arguing as in \cite{aadl2} it suffices to bound 
$\Var\left(\pi^Q(\VV(\mathcal{C}_{\mathbf Y_d}(\mathbf{0})\cap R_j))\right)$,  
where $R_0$ is an HSR contained in the spherical cap
% At this point it is convenient to work with caps
\[
C(e_0,\gamma /\sqrt{d})=\{s:d(s,e_0)<\gamma /\sqrt{d}\}.
\]
for some $\gamma$ depending on $m$.

% Note that by the second item in Proposition \ref{p:partition} each HSR $R_j$ is included in a cap of radius $\gamma r$ 
% for some $\gamma$ depending on $m$. 

% By the invariance under isometries of the distribution of $\bY_d$, 
% the distribution of the volume of the zero set  intersected with a cap $C(s_0,\gamma r)$ does not depend on its center $s_0$. 
% Thus, without loss of generality we work with the cap $C(e_0,\gamma r)$ of angle $\gamma r$ centred at the east-pole $e_{0}=(1,0,\ldots,0)$, 
% see Nazarov-Sodin \cite{ns}. 
We use the local chart $\phi:C(e_{0},\gamma /\sqrt{d}) \to B(0,\sin(\gamma /\sqrt{d}))\subset \R^m$  
defined by 
\[
\phi^{-1}(u)=(\sqrt{1-\|u\|^2},u),\quad u\in B(0,\sin(\gamma /\sqrt{d})),
\] 
to project this set over the tangent space. 
Define the random field 
$\mathcal Y_d:B\left(0,\gamma \right)\subset\R^m\to\R^r$, as 
\[
\mathcal Y_d(u)=\bY_d(\phi^{-1}(u \sqrt{d})).
\]
Observe that the $\ell$ coordinates, ${\cal Y}^{(\ell)}_d$ say, of ${\cal Y}_d$ are independent. 
Clearly, the zero set of $\bY_d$ on $R\subset C(e_{0},\gamma /\sqrt{d})$ 
and the zero set of ${\mathcal Y}_d$ on $\phi(R \sqrt{d})\subset B(0,\gamma)$ coincide. 
That is
\[
\mathcal{C}_{\mathbf Y_d}(\mathbf{0})\cap R=\mathcal{C}_{\mathcal Y_d}(\mathbf{0})\cap \phi(R \sqrt{d}).
\]

\begin{proposition}\label{p:local}
The sequence of processes ${\mathcal Y}^{(\ell)}_d(u)$ 
as well as its first and second order derivatives 
converge in the finite dimensional distribution sense towards the mean zero Gaussian processes $\mathcal Y_{\infty}$ 
with covariance function $\Gamma(u,v)=e^{-\frac{||u-v||^2}2}$ 
and its corresponding derivatives. \hfill $\blacksquare$
\end{proposition}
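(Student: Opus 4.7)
The plan is to reduce the statement to a covariance computation. By construction of the KSS ensemble, the coordinates $\mathcal{Y}_d^{(1)},\ldots,\mathcal{Y}_d^{(r)}$ are independent copies of the same scalar Gaussian field, and the same independence is preserved for their derivatives of any order. Thus it is enough to prove finite-dimensional convergence, jointly with first and second derivatives, for a single coordinate $\mathcal{Y}_d=\mathcal{Y}_d^{(\ell)}$. Since $\mathcal{Y}_d$ is Gaussian and centered, and likewise its partial derivatives (which are limits in $L^2$ of Gaussian linear combinations), the whole finite-dimensional law of $(\mathcal{Y}_d,\partial\mathcal{Y}_d,\partial^{2}\mathcal{Y}_d)$ is centered Gaussian. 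Hence fdd convergence reduces to the pointwise convergence of covariances and their mixed partial derivatives up to order two.

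Writing $\phi^{-1}(u/\sqrt{d})=(\sqrt{1-\|u\|^{2}/d},\,u/\sqrt{d})\in S^{m}$, the covariance of $\mathcal{Y}_d$ is
\begin{equation*}
K_d(u,v)=\langle \phi^{-1}(u/\sqrt{d}),\phi^{-1}(v/\sqrt{d})\rangle^{d}
= \Bigl(\sqrt{(1-\|u\|^{2}/d)(1-\|v\|^{2}/d)}+\langle u,v\rangle/d\Bigr)^{d}.
\end{equation*}
A Taylor expansion of the square roots gives
\begin{equation*}
\langle \phi^{-1}(u/\sqrt{d}),\phi^{-1}(v/\sqrt{d})\rangle
= 1-\frac{\|u-v\|^{2}}{2d}+O\!\left(\frac{1}{d^{2}}\right),
\end{equation*}
uniformly for $u,v$ in any fixed compact set of $\R^m$. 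Raising to the $d$-th power and passing to the limit yields
\begin{equation*}
K_d(u,v)\;\longrightarrow\;\Gamma(u,v)=\exp\!\Bigl(-\tfrac{1}{2}\|u-v\|^{2}\Bigr),
\end{equation*}
as required for the fdd convergence of $\mathcal{Y}_d$ alone.

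For the derivatives, I would use the Gaussian principle that covariances of partial derivatives are obtained by differentiating the covariance, namely
\begin{equation*}
\E\bigl[\partial_{u}^{\alpha}\mathcal{Y}_d(u)\,\partial_{v}^{\beta}\mathcal{Y}_d(v)\bigr]
=\partial_{u}^{\alpha}\partial_{v}^{\beta}K_d(u,v),
\end{equation*}
for multi-indices $|\alpha|,|\beta|\le 2$. Since $K_d$ is an analytic function of $(u,v)$ on any compact set, and the expansion above is obtained by composing analytic maps, the convergence $K_d\to\Gamma$ can be upgraded to convergence together with all partial derivatives of bounded order, uniformly on compact subsets. Consequently
\begin{equation*}
\partial_{u}^{\alpha}\partial_{v}^{\beta}K_d(u,v)\;\longrightarrow\;\partial_{u}^{\alpha}\partial_{v}^{\beta}\Gamma(u,v),
\end{equation*}
which is precisely the covariance, at points $u,v$, between $\partial^{\alpha}\mathcal{Y}_{\infty}$ and $\partial^{\beta}\mathcal{Y}_{\infty}$. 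The covariance matrix of $(\mathcal{Y}_d,\partial\mathcal{Y}_d,\partial^{2}\mathcal{Y}_d)$ evaluated at any finite collection of points in $B(0,\gamma)$ therefore converges entry-wise to the covariance matrix of the corresponding evaluations of $(\mathcal{Y}_{\infty},\partial\mathcal{Y}_{\infty},\partial^{2}\mathcal{Y}_{\infty})$, which, together with the Gaussianity and the reduction of the first paragraph, proves the proposition.

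The only nontrivial point is the interchange of limit and differentiation. The cleanest way is to treat $K_d$ as a function of the single scalar variable $x=\langle \phi^{-1}(u/\sqrt d),\phi^{-1}(v/\sqrt d)\rangle$ close to $1$: each partial derivative of $K_d(u,v)$ is a polynomial expression in $d$, in the values of $x$, and in the derivatives of $x$ with respect to $(u,v)$, all of which admit explicit expansions in powers of $1/d$ akin to those already used in Lemma \ref{l:limits} and the derivation of $\mathcal{A},\mathcal{B},\mathcal{C},\mathcal{D}$. Substituting these expansions and collecting terms produces, in the limit, exactly the derivatives of $\Gamma$; the book-keeping is tedious but mechanical, and supplies the uniform control on compact sets needed above.
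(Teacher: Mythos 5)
Your proof is correct and follows the standard route: reduce to one coordinate by independence, use Gaussianity to reduce fdd convergence to convergence of covariances, compute $\langle\phi^{-1}(u/\sqrt d),\phi^{-1}(v/\sqrt d)\rangle^{d}\to e^{-\|u-v\|^{2}/2}$, and transfer the limit to derivatives of the kernel. The paper itself gives no proof (it defers to Nazarov--Sodin and the companion paper \cite{aadl2}), but the argument there is essentially the same covariance computation, so your proposal matches the intended approach.
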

The proof of this proposition can be consulted in  \cite{ns} and also in \cite{aadl2}.

\begin{remark}
The local limit process $\mathcal Y_\infty$ has as coordinates
$(\mathcal Y^{(1)}_\infty,\ldots,\mathcal Y^{(r)}_\infty)$ such that each one of them is an
independent copy of the random field with  covariance
$\Gamma(u)=e^{-\frac{||u||^2}2}$, $u\in\R^m$. Then its covariance matrix writes 
$$\tilde \Gamma(u)= \diag(\Gamma(u),\ldots,\Gamma(u)).$$
The   second derivative matrix  $\tilde \Gamma''(u)$ 
can be written in a similar way, but here the blocks are equal to the matrix
$\Gamma''(u)=(a_{ij})$ where $a_{ij}=e^{-\frac{||u||^2}2}H_1(u_i)H_1(u_j)$ if $i\neq j,$ and $a_{ii}=e^{-\frac{||u||^2}2}H_2(u_i).$ 
% We can adapt  the Estrade and Fournier \cite{ef} result that says that the second moment of the roots in a compact set of such a process exists if for 
%  some $\delta>0$ we have
% $$
% \int_{B(0,\delta)}\frac{||\tilde\Gamma''(u)-\tilde
%   \Gamma''(0)||}{||u||^m}du=m\int_{B(0,\delta)}\frac{||\Gamma''(u)-I||}{||u||^m}du<\infty.
% $$ 
% Since $\Gamma$ is $C^\infty$ we have 
% $$||\Gamma''(0)-\Gamma''(u)||=o(||u||)\mbox{ as } u\to0,$$
% The convergence of the above integral follows easily using hyperspherical coordinates.
It follows from \cite[Th.12]{al} that the variance of 
$\mathcal V(\mathcal{C}_{\mathcal Y_\infty}(\mathbf{0})\cap K)$ is finite 
for any compact $K$.
\end{remark}

%%
% A key fact is that the local limit process, 
% though it can not be defined globally, 
% has the same distribution regardless $j$. 
% Thus, we bound 
% $\Var\left(\pi^Q(\mathcal V(\mathcal{C}_{\mathbf Y_d}(\mathbf{0})\cap R_j))\right)$ 
% uniformly in $j$ 
% by approximating it with the tail of the variance of the volume 
% of the zero set of the limit process $\mathcal Y_{\infty}$ 
% on the limit set $[-1/2,1/2]^m$.
% %%

\medskip 

The proof of the main theorem will be achieved as soon as we have proved 
the following proposition.
\begin{proposition}\label{p:cap}
For 
% all $j\leq k(m,d)$ and 
$\varepsilon>0$ there exist $Q_0$ and $d_0$ such that for $Q\geq Q_0$
\[
\sup_{d>d_0}\E[(\pi^Q(\mathcal V(\mathcal{C}_{\mathbf Y_d}(\mathbf{0})\cap R_j)))^2]<\varepsilon.
\]
\end{proposition}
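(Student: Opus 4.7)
The strategy parallels the square case in \cite{aadl} and consists in transferring the tail-variance estimate on each HSR $R_j$ to a single universal limit object: the $(m-r)$-volume of the zero set of $\mathcal Y_\infty$ on the unit cube $[-1/2,1/2]^m$. Since the distribution of $\bY_d$ is orthogonally invariant, the law of $\pi^Q(\mathcal V(\mathcal{C}_{\mathbf Y_d}(\mathbf 0)\cap R_j))$ depends on $j$ only through the shape of $R_j$, so by Proposition \ref{p:partition}(3) it suffices to work with a single HSR centered at the east pole $e_0$. The local chart $\phi$ together with the rescaling $u\mapsto u/\sqrt d$ sends $R_j$ onto $\sqrt d\,\phi(R_j)$, a region converging in Hausdorff metric to $[-1/2,1/2]^m$; through the coarea formula this identifies, up to an explicit power of $d$, the volume on $R_j$ with the volume of the zero set of $\mathcal Y_d$ on $\sqrt d\,\phi(R_j)$.

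The next step is to show that the variance of the $q$-th chaotic projection on $R_j$ converges, as $d\to\infty$, to the variance $V_q^\infty$ of the $q$-th chaotic component of $\mathcal V_\infty:=\mathcal V(\mathcal{C}_{\mathcal Y_\infty}(\mathbf 0)\cap[-1/2,1/2]^m)$. This follows by applying Proposition \ref{prop:expansion} to the rescaled process $\mathcal Y_d$, expressing $V_q^\infty$ as the limit of the corresponding Hermite integral (as in the Step 1 computation leading to $V_q^r$), and using Proposition \ref{p:local} together with the uniform bounds of Lemma \ref{l:bounds} to pass to the limit under the integral sign via dominated convergence. By rotational invariance the limit is independent of $j$.

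The Estrade--Fournier type integrability condition verified in the remark after Proposition \ref{p:local} ensures that $\mathcal V_\infty$ has finite second moment. Parseval's identity applied to its chaotic expansion then gives $\sum_{q\geq 0} V_q^\infty=\Var(\mathcal V_\infty)<\infty$, so for any $\varepsilon>0$ we may choose $Q_0$ with $\sum_{q\geq Q_0}V_q^\infty<\varepsilon/2$. Finally, by the Kac-Rice second-moment formula of Lemma \ref{l:exvol} and the covariance bounds of Lemma \ref{l:bounds}, $\Var(\mathcal V(\mathcal{C}_{\mathbf Y_d}(\mathbf 0)\cap R_j))$ is bounded uniformly in $d$ (for $d\geq d_0$) and in $j$. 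Combining these three facts yields, for $Q\geq Q_0$ and $d\geq d_0$, the bound $\E[(\pi^Q(\mathcal V(\mathcal{C}_{\mathbf Y_d}(\mathbf 0)\cap R_j)))^2]<\varepsilon$ uniformly in $j$.

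The main obstacle is the second step: promoting the finite-dimensional convergence of Proposition \ref{p:local} to $L^2$ convergence of the $q$-th chaotic component of a \emph{non-local} functional of the field. One needs uniform integrability for the densities and Jacobians appearing in the Kac-Rice formula, which in the rectangular case $r<m$ involves the factor $f^r(\mathbf y')=\sqrt{\det(\mathbf y'(\mathbf y')^T)}$ rather than the simpler square-case Jacobian. Controlling this factor together with the conditional variance $\sigma^2$ and correlation $\rho$ (Lemma \ref{l:limits}) on the rescaled scale requires estimates analogous to those in \cite{aadl}, adapted by the invariance structure of matrix (\ref{matrix}) to the rectangular setting.
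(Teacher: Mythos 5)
Your overall strategy (transfer to the cap at the east pole, rescale through the chart $\phi$, compare with the universal limit process $\mathcal Y_\infty$ on $[-1/2,1/2]^m$, and use Parseval on the limit to pick $Q_0$) is the same as the paper's. However, the final combination of facts has a genuine gap. You conclude from three ingredients: (i) for each fixed $q$, $V^{loc}_{q,d}\to V^{loc}_q$; (ii) $\sum_q V^{loc}_q<\infty$, so the limit tail beyond $Q_0$ is small; (iii) $\Var\big(\mathcal V(\mathcal{C}_{\mathbf Y_d}(\mathbf 0)\cap R_j)/d^{\frac r2-\frac m4}\big)$ is \emph{bounded uniformly} in $d$. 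These three facts do not imply that $\sum_{q>Q_0}V^{loc}_{q,d}$ is small uniformly in $d$: the variance mass can escape to chaoses of order growing with $d$ (think of $V^{loc}_{q,d}=\indicator_{\{q=d\}}$, which satisfies (i) with $V^{loc}_q=0$, is uniformly bounded in total, yet has tail equal to $1$ for every $d>Q$). Writing $\sum_{q>Q}V^{loc}_{q,d}=\Var_d-\sum_{q\le Q}V^{loc}_{q,d}$, a uniform bound $\Var_d\le C$ only gives $\limsup_d\sum_{q>Q}V^{loc}_{q,d}\le C-\sum_{q\le Q}V^{loc}_q$, which is useless unless $C$ can be taken arbitrarily close to $\Var(\mathcal V_\infty)$.

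What is actually needed, and what the paper proves, is the \emph{convergence} of the total variance,
\begin{equation*}
\Var\left(\frac{\mathcal V(\mathcal{C}_{\mathcal Y_d}(\mathbf{0})\cap \tilde R)}{d^{\frac r2-\frac m4}}\right)
\mathop{\to}_{d\to\infty}
\Var\left(\mathcal V\left(\mathcal{C}_{\mathcal Y_\infty}(\mathbf{0})\cap \left[-\tfrac12,\tfrac12\right]^m\right)\right),
\end{equation*}
established via the Kac--Rice second-moment formula, the domination of $\tilde{\mathcal H}_{q,d}$, Proposition \ref{p:local} and Proposition \ref{p:partition}. By Parseval this says $\sum_q V^{loc}_{q,d}\to\sum_q V^{loc}_q$; subtracting the convergent partial sums $\sum_{q\le Q}V^{loc}_{q,d}\to\sum_{q\le Q}V^{loc}_q$ (which follow from your step (i)) gives convergence of the tails, $\sum_{q>Q}V^{loc}_{q,d}\to\sum_{q>Q}V^{loc}_q$, and only then does the choice of $Q_0$ followed by $d_0$ close the argument. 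So you should upgrade your ingredient (iii) from uniform boundedness to the precise limit above; the rest of your outline then matches the paper's proof.
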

\begin{proof}
Let $R=R_0\subset C(e_{0},\gamma /\sqrt{d})$, 
By Remark \ref{VR}, 
the Hermite expansion holds true also for the volume of the zero set of $\bY_d$ on any subset of $S^m$. 
Hence, 
\[
\mathcal V(\mathcal{C}_{\mathbf Y_d}(\mathbf{0})\cap R)=\sum_{q=0}^\infty d^{\frac r2}\int_{R}g_q(\bhZ(t))dt.
\] 
Let us define $\tilde R=\phi(R)\subset B(0,\sin\frac a {\sqrt d})\subset \R^m$. 
It follows that
\[
\mathcal V(\mathcal{C}_{\mathcal Y_d}(\mathbf{0})\cap \tilde R)=\mathcal V(\mathcal{C}_{\mathbf Y_d}(\mathbf{0})\cap R)
=\sum_{q=0}^\infty d^{\frac r2}\int_{\tilde R}g_q({\mathcal
  Y}_d(u),{\mathcal Y}_d'(u))J_\phi(u)du,
\] 
where $J_\phi(u)=(1-\|u\|^2)^{-1/2}$ is the Jacobian. Rescaling $u=v/\sqrt{d}$ we get
\[
\frac{\mathcal V(\mathcal{C}_{\mathcal Y_d}(\mathbf{0})\cap \tilde R)}{d^{\frac r2-\frac m4}}
=\sum_{q=0}^\infty\int_{\sqrt d \tilde R}
g_q\left({\mathcal Y}_d\left(\frac v{\sqrt d}\right),{\mathcal
  Y}_d'\left(\frac v{\sqrt d}\right)\right)J_{\phi}\left(\frac{v}{\sqrt{d}}\right)dv.
\]
Besides, Kac-Rice formula, the domination for ${\cal H}_{qd}$ previously  obtained, 
the convergence of ${\mathcal Y}_d$ to ${\mathcal Y}_\infty$ in Proposition \ref{p:local} 
and the convergence, after normalization, of $\bar{R}$ to $[-1/2,1/2]^m$ in Proposition \ref{p:partition} 
yield
\begin{equation}\label{eq:conv-tan}
\Var\left(\frac{\mathcal V(\mathcal{C}_{\mathcal Y_d}(\mathbf{0})\cap \tilde R)}{d^{\frac r2-\frac m4}}\right)
\mathop{\to}_{d\to\infty}
  \Var\left(\mathcal V\left(\mathcal{C}_{\mathcal Y_\infty}(\mathbf{0})\cap \left[-\frac12,\frac12\right]^m\right)\right). 
\end{equation}
In fact, 
for the second moment we have \\
$\displaystyle\E\left[\left(\frac{\mathcal V(\mathcal{C}_{\mathcal Y_d}(\mathbf{0})\cap \tilde R)}{d^{\frac r2-\frac m4}}\right)^2\right]$
$$
=d^m\int_{\tilde R\times\tilde R}\E[f(\mathcal Y'_d(u))f(\mathcal Y'_d(v))\,|{\mathcal Y_d}(u)={\mathcal
  Y}_d(v)=0]\; p_{u,v}(0,0)J_{\phi}(u)J_{\phi}(v)dudv$$

\vspace{0.2cm}

$$
=\int_{\sqrt d \tilde R\times\sqrt d\tilde R}\E\left[f\left(\mathcal Y'_d\Big(\frac{u}{\sqrt d}\Big)\right)
f\left(\mathcal Y'_d\Big(\frac{v}{\sqrt d}\Big)\right)\,|{\mathcal Y}_d\Big(\frac{u}{\sqrt d}\Big)={\mathcal
  Y}_d\Big(\frac{v}{\sqrt d}\Big)=0\right]$$
  $$\times p_{u,v}(0,0)J_{\phi}\Big(\frac{u}{\sqrt d}\Big)J_{\phi}\Big(\frac{v}{\sqrt d}\Big)dudv$$
$$
\mathop{\to}_{d\to\infty}
\int_{(\left[\frac12,\frac12\right]^m)^2}
\E[f(\mathcal Y'_\infty(u))f(\mathcal Y'_\infty(v))\mid\mathcal Y_{\infty}(u)=\mathcal Y_{\infty}(v)=0]
\; p_{{\cal Y}_\infty(u),{\cal Y}_\infty(v)}(0,0)dudv$$
$$=\E\left[\left(\mathcal V\left(\mathcal{C}_{\mathcal Y_\infty}(\mathbf{0})\cap \left[-\frac12,\frac12\right]^m\right)\right)^2\right]
<\infty.$$
The  term of the square of the expectation is easier. 

The same arguments show that for all $q$ we have 
\begin{equation*}
V^{loc}_{q,d}:=\Var\Big(\pi_q\Big(\frac{\mathcal V(\mathcal{C}_{\mathcal Y_d}(\mathbf{0})\cap \tilde R)}{d^{\frac r2-\frac m4}}\Big)\Big)
\mathop{\to}_{d\to\infty} 
\Var\left(\pi_q\left(\mathcal V\left((\mathcal{C}_{\mathcal Y_\infty}(\mathbf{0}))\cap \left[-\frac12,\frac12\right]^m\right)\right)\right)=:V^{loc}_{q}.
\end{equation*}
Thus, for all $Q$ it follows that  
$
\sum^Q_{q=0}V^{loc}_{q,d}
\mathop{\to}_{d\to\infty} \sum^Q_{q=0}
V^{loc}_{q}.
$
By Parseval's identity, \eqref{eq:conv-tan} can be written as
$$
\sum^{\infty}_{q=0}V^{loc}_{q,d}\mathop{\to}_{d\to\infty} \sum^{\infty}_{q=0}V^{loc}_{q}.
$$
Thus, by taking the difference we get 
\begin{equation}\label{eq:diff}
\sum_{q> Q}V^{loc}_{q,d}\mathop{\to}_{d\to\infty} \sum_{q>Q}V^{loc}_{q}. 
\end{equation}
Given that the series $\sum^\infty_{q=0}V^{loc}_{q}$ is convergent, 
we can choose $Q_0$ such that for $Q\geq Q_0$ it holds $\sum^\infty_{q>Q}V^{loc}_{q}\le \varepsilon/2$. 
Hence, for this $Q_0$ and by using \eqref{eq:diff} we can choose $d_0$ such that for all $d>d_0$ and $Q\geq Q_0$
$$\sum_{q>Q}V^{loc}_{d,q}\le \varepsilon.$$ 
Namely, there exists $d_0$ such that for $Q\geq Q_0$
$$
\sup_{d>d_0}\E\left[\left(\pi^Q\left(\frac{\mathcal V(\mathcal{C}_{\mathcal Y_d}(\mathbf{0})\cap \tilde R)}{d^{\frac r2-\frac m4}}
\right)\right)^2\right]<\varepsilon.
$$
\end{proof}

%%%%%%%%%%%%%%%%%%%%%%%%%%%%%%%%
\section{Appendix} \label{app}
Letendre \cite{lt} and Letendre \& Puchol \cite{lt-pu} have studied the asymptotic variance 
of the volume of the zero set. 
We consider this problem using 
another method. 

Write the variance as
\[
\Var(\mathcal V_{\mathbf Y_d }(\mathbf 0))=\E[(\mathcal V_{\mathbf Y_d}(\mathbf 0)^2]-(\E[\mathcal V_{\mathbf Y_d}(\mathbf 0])^2.
\]

Let $f$ and $f_\gamma$ be defined as in \eqref{d:f} and \eqref{d:fgamma} respectively. 

We have already computed 
the second term, as in \eqref{eq:ricevol}-\eqref{e:fz}, 
and for the first one we apply the Rice formula for the second moment 
(\cite[Ch.6]{aw}).
\begin{align}\label{secondmoment}
\E[\mathcal V_{\mathbf Y_d}(\mathbf 0)^2]
&= \int_{S^m\times S^m} \E[f(\bY'_d(t))f(\bY'_d(s))\,|\bY( t)=\bY(s)=0]  
p_{\bY( t),\bY( s)}( 0,0)d tds \notag\\
&= d^{r}\int_{\mathbb S^m\times\mathbb S^m}
\E[f_{\sigma}( \zeta_1) f_{\sigma}( \zeta_2)]p_{\bY(t),\bY(s)}(0,0)dtds,
\end{align}
where $p_{\bY(t),\bY(s)}( 0,0)$ is the density of the vector 
$(\bY(t),\bY(s))$. 
By independence it holds 
\[
p_{\bY(t),\bY(s)}(0,0)=\prod_{\ell=1}^rp_{Y_\ell( t),Y_\ell(s)}(0,0)
=\frac1{(2\pi)^r(1-\langle t,s\rangle^{2d})^{\frac r2}}.
\]

As in \eqref{e:fbeta}, we have
\begin{eqnarray*}
f_\gamma(\mathbf y')=\sum_{\bbeta} f_{\bbeta}(\gamma)\tilde{\bf H}_{\bbeta}(\mathbf y'),
\end{eqnarray*}
\begin{remark}
Let us point out that for $r=m$ it holds $f(\mathbf y')=|\det \mathbf y'|$. 
Here $\mathbf y'$ is the $m\times m$ matrix 
whose columns are the vectors $(y'_{\ell1},\ldots, y'_{\ell,m})$ for $\ell=1,\ldots,m.$ Furthermore, 
by the homogeneity of the determinant we 
have $f_{\gamma}(\mathbf y')=\gamma f(\mathbf y')$. The lack of this fact  when $r<m$ implies that we need a different approach for obtaining the 
asymptotic variance.
\end{remark}

Thus, the expansion \eqref{e:fbeta} 
and the bi-dimensional Mehler's formula \cite[Th.10.7]{aw} give 
\begin{align} \label{e:H}
  d^{r}\E[f_{\sigma}( \zeta_1) f_{\sigma}( \zeta_2 )] &=d^{r}\sum_{\bbeta}
  \left[f_{\bbeta} \left(\sigma\left(\langle
  t,s\rangle\right)\right)\right]^2\bbeta! \left[\mathcal D\left(\langle 
  t,s\rangle\right)\right]^{(|\bbeta|-\sum_{\ell=1}^{r}\beta_{\ell1})}\left[\rho\left(\langle t,
  s\rangle\right)\right]^{\sum_{\ell=1}^{r}\bbeta_{\ell1}} \\
  &=d^{r}\mathcal H\left(\langle t, s\rangle\right).\nonumber 
\end{align}
Using \eqref{eq:intS} we obtain
\begin{eqnarray*} 
\E[\mathcal{V}_{\mathbf Y_d}(\mathbf 0)^2]&=&\kappa_{m}\kappa_{m-1}\int_0^\pi\mathcal 
H(\cos(\theta))\frac{d^{r}}{(2\pi)^r}\frac1{(1-\cos^{2d}(\theta))^{\frac r2}}\sin^{m-1}(\theta) d\theta\\
&=&\kappa_{m}\kappa_{m-1}\frac{d^{r-\frac12}}{(2\pi)^r}\int_0^{\sqrt
  d\pi}\mathcal H\left(\cos\left(\frac z{\sqrt d}\right)\right)
  \frac{\sin^{m-1}\left(\frac z{\sqrt d}\right) }
  {\left(1-\cos^{2d}\left(\frac z{\sqrt d}\right)\right)^{\frac r2}}dz.
\end{eqnarray*}
By Parseval equality we have 
\[
\sum_{\mathbf\nu}|f_{\bbeta}(\gamma)|^2\bbeta!
=\int_{\R^{r\times m}}|f_{\gamma}(\mathbf x)|^2\varphi_{r\times m}
(\mathbf x)d\mathbf x\le m^r(\gamma\vee 1)^{2r}
\E\left[\sup_{1\le\ell\le r,\,1\le j\le m}\left|\frac{Y_{\ell j}(\mathbf
e_0)}{\sqrt d}\right|^{2r}\right].
\]
Since $\sigma^2\big(\frac z{\sqrt d}\big)\le 1$ (Lemma \ref{l:bounds}), then 
\begin{eqnarray} \label{norma2}
  \sum_{\bbeta}\left|f_{\bbeta}\left(\sigma\left( \frac z{\sqrt d}\right)\right)\right|^2\bbeta! 
  < \mathbf{C}.
\end{eqnarray} 
Therefore, we can interchange 
the series with the integral obtaining
\begin{eqnarray}\label{serie}
\E[(\mathcal{V}_{\mathbf Y_d}(\mathbf 0)^2]
=\kappa_{m}\kappa_{m-1}\frac{d^{r-\frac12}}{(2\pi)^r}\sum_{\bbeta}\int_0^{\sqrt d\pi}
  \mathcal H_{\bbeta}\left(\cos\left(\frac  z{\sqrt d}\right)\right)
  \frac{\sin^{m-1}\left(\frac z{\sqrt d}\right)}
  {\left(1-\cos^{2d}\left(\frac z{\sqrt d}\right)\right)^{\frac r2}}dz,
\end{eqnarray}
where $\mathcal H$ is as in \eqref{e:H} 
and 
$\mathcal H_{\bbeta}= \left[f_{\bbeta} \left(\sigma\left(\langle
  t,s\rangle\right)\right)\right]^2\bbeta! \left[\mathcal D\left(\langle 
  t,s\rangle\right)\right]^{(|\bbeta|-\sum_{\ell=1}^{r}\beta_{\ell1})}\left[\rho\left(\langle t,
  s\rangle\right)\right]^{\sum_{\ell=1}^{r}\bbeta_{\ell1}}$.

Thus, using the above notation and 
normalizing  we have
\begin{multline*}
\E\left[\left(\frac{\mathcal V_{\mathbf Y_d}(\mathbf 0}{d^{\frac r2-\frac{m}4}}\right)^2\right]=  
\kappa_{m}\kappa_{m-1}\frac{d^{\frac{m-1}2}}{(2\pi)^r}\int_0^{\sqrt d\pi}
\sum_{|\bbeta|\ge1}\mathcal H_{\bbeta}\left(\cos(\frac{z}{\sqrt d})\right)\ 
\frac{\sin^{m-1}\left(\frac z{\sqrt d}\right)}
{\left(1-\cos^{2d}\left(\frac z{\sqrt d}\right)\right)^{\frac{r}2}} dz\\
+\kappa_{m}\kappa_{m-1}\frac{d^{\frac{m-1}2}}{(2\pi)^r}
\int_0^{\sqrt d\pi}
\Big|f_{\mathbf 0} \Big( \sigma \Big( \frac{z}{\sqrt d} \Big) \Big) \Big|^2
\frac{\sin^{m-1}\left(\frac z{\sqrt d}\right)}
{\left(1-\cos^{2d}\left(\frac z{\sqrt d}\right)\right)^{\frac{r}2}} dz.
\end{multline*}
We start with the terms $|\bbeta|\geq 1$. 
To apply the dominated convergence theorem we must look for a uniform bound. But let us begin with a remark.
\begin{remark}\label{simetria}
The symmetrization  argument used in step 3 of section 3.2 of \cite{aadl} gives that the integral over $[\sqrt d\frac{\pi}2,\sqrt d\pi]$ of each term 
in the series (\ref{serie}) is equal to the integral of same term on $[0,\sqrt d\frac{\pi}2]$ except for a multiplication by $(-1)^{(d-1)|\bbeta|}$. 
In this form the bound that is obtained for applying the dominated convergence  theorem in the latter interval serves also for the former.
\end{remark}

Using Lemma \ref{l:bounds} it holds that there exists a $d_0$ such that for $\frac z{\sqrt d}<\frac\pi2$ and $d>d_0$ it holds
\[
|\rho|\leq \mathbf{C}\ (1+z^2)^2\exp(-2\alpha z^2)
\mbox{ and } \mathcal D\le \exp\big(-2\alpha z^2\big).
\] 
By the Remark \ref{simetria} it is enough to study only 
the interval $[0,\sqrt d\frac\pi2]$. In this form we get
\begin{multline*}
  \Bigg|\sum_{|\bbeta|\ge1}
  \Big[ f_{\bbeta} \Big( \sigma \Big( \frac{z}{\sqrt d} \Big)\Big) \Big]^2
  \mathbf\bbeta! 
  \Big[ \mathcal D \Big( \frac z{\sqrt d}\Big) \Big]^{
  |\bbeta| -\sum_{\ell=1}^{r}\beta_{\ell1} }
  \Big[ \rho \Big( \frac{z}{\sqrt d} \Big)\Big]^{\sum_{\ell=1}^{r}\beta_{\ell1}}\Bigg| \\
  \leq 
  \mathbf{C}\ \sum_{|\bbeta|\ge1}
  \Bigg[f_{\bbeta} \Big( \sigma \Big( \frac z{\sqrt d} \Big)\Big)\Bigg]^2
  \bbeta! \,(1+z^2)^2\exp(-2\alpha z^2)\\
  \leq  \mathbf{C}\ (1+z^2)^2\exp(-2\alpha z^2).
\end{multline*}
Above we have used (\ref{norma2}).

It remains to consider the integral over the interval $[0,z_0]$. 
But now the integrand can be bounded by
\begin{equation} \label{e:cerca0}
\mathbf{C}\ \frac1{\left(1-\cos^{2d}\left(\frac z{\sqrt d}\right)\right)^{\frac r2}}d^{\frac{m-1}2}
\sin^{m-1}\left(\frac z{\sqrt d}\right)
\le \mathbf{C}\ \frac1{(1-\exp(-2\alpha z^2))^{\frac r2}} 
z^{m-1},
\end{equation}
and the function on the right hand side is integrable whenever $r<m.$ 

In this manner, applying the dominated convergence theorem we get
\begin{multline*}
\lim_{d\to\infty}\kappa_{m}\kappa_{m-1}\frac{d^{\frac{m-1}2}}{(2\pi)^r}
\int_0^{\sqrt d\pi}
\sum_{|\bbeta|\ge1}\mathcal H_{\bbeta}\left(\cos\left(\frac{z}{\sqrt d}\right)\right)
\frac{\sin^{m-1}\left(\frac z{\sqrt d}\right)}
{\left(1-\cos^{2d}\left(\frac z{\sqrt d}\right)\right)^{\frac{r}2}} dz\\
=\kappa_{m}\kappa_{m-1}\frac{1}{(2\pi)^r}\int_0^{\infty}
\sum_{|\bbeta|\ge1}\mathcal H_{\bbeta}(z)\ \frac{z^{m-1}}{(1-\exp(-z^2))^{\frac{r}2}} dz.
\end{multline*}

The exact expression for the function $\mathcal H_{\bbeta} (z)$ is obtained 
by using the results of the step 2 of \cite{aadl}. 
In fact
\begin{multline*}
\mathcal H_{\bbeta}(z)
=\left|f_{\bbeta} \Big( \Big( \frac{1-(1+z^2)\exp(-z^2)}{1-\exp(-z^2)} \Big)^{1/2} \Big) \right|^2
\bbeta!\, \left(\exp\left(-\frac{z^2}2\right)\right)^{|\bbeta|-\sum_{ \ell=1}^r\beta_{\ell1}}\\
\times 
\Big(\frac{(1 - z^2 - \exp(-z^2))  \exp(-z^2/2) }
   {1-(1+z^2)\exp(-z^2)}\Big)^{\sum_{\ell=1}^r\beta_{\ell1}}.
\end{multline*}

To end our proof it only remains to consider the zero term in the expansion. Consider first 
\[
\mathcal I_d=
\kappa_{m}\kappa_{m-1}\frac{d^{\frac{m-1}2}}{(2\pi)^r}\int_0^{\sqrt d\pi}
\Big[ \Big| f_0 \Big( \sigma \Big( \frac{z}{\sqrt d} \Big)\Big)\Big|^2
-|f_0(1)|^2\Big]
\frac{\sin^{m-1}\left(\frac z{\sqrt d}\right)}
{\left(1-\cos^{2d}\left(\frac z{\sqrt d}\right)\right)^{\frac{r}2}} dz.
\]
But
\begin{align*}
  \left|
  \Big| f_0 \Big( \sigma\Big( \frac{z}{\sqrt d}\Big)\Big) \Big|^2  -|f_0(1)|^2 
  \right| 
    &=   \mathbf{C} \left|\int_{\R^{r\times m}}\int_0^1
         \sum_{\ell=1}^r\frac{\partial f_{\xi\sigma+(1-\xi)}(\mathbf x)}{\partial x_{\ell1}}
         d\xi \Big( 1- \sigma\Big( \frac{z}{\sqrt d} \Big) \Big)\varphi_{r\times m}(\mathbf x)d\mathbf x\right|\\
   &\leq \mathbf{C}\ \mathbf 1_{\{z\le z_0\}}+\mathbf C\exp(-2\alpha z^2)\mathbf 1_{\{z>z_0\}}.
\end{align*}
Yielding by the dominated convergence theorem that
\[
\lim_{d\to\infty}\mathcal 
I_d=\frac{\kappa_{m}\kappa_{m-1}}{(2\pi)^r}\int_0^{\infty}\left[\left|
f_0\left(\Big(\frac{1-(1+z^2)\exp(-z^2)}{1-\exp(-z^2)}\Big)^{1/2}
\right)\right|^2-|f_0(1)|^2\right]
\frac{z^{m-1}}{(1-\exp(-z^2))^{\frac{r}2}} dz.
\]
Finally, we will consider the remaining term.
In first place let us point out that
\[
\kappa_{m}\kappa_{m-1}|f_0(1)|^2\frac{d^{\frac{m-1}2}}{(2\pi)^r}\int_0^{\sqrt
d\pi}\sin^{m-1}\left(\frac z{\sqrt d}\right) dz
=\frac1{d^{r-\frac{m}2}}(\E[\mathcal V(\mathcal{C}_{\mathbf Y_d}(\mathbf{0}))])^2.
\]
Then, 
substracting this term we get
\begin{equation}\label{integrant}
\mathcal J_d 
  =\kappa_{m}\kappa_{m-1}\frac{d^{\frac{m-1}2}}{(2\pi)^r}
     \int_0^{\sqrt d\pi}|f_0(1)|^2
      \Bigg[ \frac1{\big(1-\cos^{2d}\big(\frac z{\sqrt d}\big)\big)^{\frac{r}2}}-1\Bigg]
      \sin^{m-1}\left(\frac z{\sqrt d}\right) dz \notag.
\end{equation}
The convergence at $0$ follows from
\[      
 \frac1{\big(1-\cos^{2d}\big(\frac z{\sqrt d}\big)\big)^{\frac{r}2}}-1
 \leq \frac1{\big(1-\cos^{2d}\big(\frac z{\sqrt d}\big)\big)^{\frac{r}2}},
\]
using \eqref{e:cerca0}.

%%%
On the large interval we use 
the lower bound for $1-{\cal C}^2$ and the upper bound for ${\cal C}$ 
in Lemma \ref{l:bounds} to obtain 
\[
\left|d^{\frac{m-1}2}\left[\frac1{\left(1-\cos^{2d}\left(\frac z{\sqrt
d}\right)\right)^{\frac{r}2}}-1\right]\sin^{m-1}\left(\frac z{\sqrt
d}\right)\right|
\le \mathbf{C}\ \exp(-2\alpha 
z^2)\frac{z^{m-1}}{(1-\exp(-2\alpha z^2))^{\frac{3r}2}}.
\]
Since these two bounds allow applying the dominated convergence theorem it holds
\[
\lim_{d\to\infty}\mathcal 
J_d=\kappa_{m}\kappa_{m-1}\frac{|f_0(1)|^2}{(2\pi)^r}
\int_0^{\infty}\left[
\frac1{\left(1-\exp(-z^2)\right)^{\frac{r}2}}-1
\right]z^{m-1}dz.
\]
Hence, it is possible to write a closed formula for the limit variance. 
Indeed, we obtain
\begin{align*}
\lim_{d\to0}\Var\left(\frac{\mathcal V_{\mathbf
  Y_d}(\mathbf 0)}{d^{\frac r2-\frac{m-1}4}}\right)&=
  \kappa_{m}\kappa_{m-1}\left\{\frac{1}{(2\pi)^r}
  \int_0^{\infty}\left(\sum_{|\bbeta|\ge1}\mathcal 
H_{\bbeta}(z)\right)\frac1{(1-\exp(-z^2))^{\frac{r}2}} z^{m-1}dz\right.\\
  &
+\frac{1}{(2\pi)^r}\int_0^{\infty}\left[\left|f_0\left(\Big(\frac{1-(1+z^2)\exp(-z^2)}
{1-\exp(-z^2)}\Big)^{1/2}\right)\right|^2-|f^r_0(1)|^2\right]
\frac{z^{m-1}}{(1-\exp(-z^2))^{\frac{r}2}} dz\\
  &\left.+
\frac{|f_0(1)|^2}{(2\pi)^r}
  \int_0^{\infty}\left[\frac1{\left(1-\exp(-z^2)\right)^{\frac{r}2}}-1\right]z^{m-1}dz\right\}
\end{align*}
The result follows.
\qed

\bibliographystyle{imsart-nameyear}

\end{document}